\newtheorem{theorem}{Theorem}[section]
\newtheorem{lemma}{Lemma}[section]
\newtheorem{remark}{Remark}[section]
\newtheorem{assumption}{Assumption}[section]
\newtheorem{problem}{Problem}[section]
\newcommand\numberthis{\addtocounter{equation}{1}\tag{\theequation}}
\title{\LARGE \bf Communication-Free Bearing-Only Distributed Target Localization and Circumnavigation by Multiple Agents}
\author{Donglin Sui$^{1}$ and Mohammad Deghat$^{1}$% <-this % stops a space
\thanks{$^{1}$The authors are with the School of Mechanical and Manufacturing Engineering, University of New South Wales, Sydney, Australia (e-mail: {\tt\small d.sui@unsw.edu.au, m.deghat@unsw.edu.au}).}%
}
\begin{document}

\maketitle
\thispagestyle{empty}
\pagestyle{empty}

% ==============================
%            Abstract
% ==============================
\begin{abstract}
    This paper considers the problem of localization and circumnavigation of an unknown stationary target by a group of autonomous agents using only local bearing measurements. We assume that no direct communication or exchange of information is permitted among the group of agents and propose an algorithm to approximate the angular separation between agents and a distributed control law that forces the agents to circumnavigate the target at a desired orbit with any prescribed angular separation. Global asymptotic stability of the system is analyzed rigorously using Lyapunov theory, cascade control strategy, and perturbation method. The performance of the proposed algorithm is verified through simulations.
\end{abstract}

\begin{keywords}
    Bearing-only measurements, circumnavigation, distributed control, localization, multi-agent system.  
\end{keywords}

% ==================================
%            Introduction
% ==================================
%{
\section{INTRODUCTION}
\label{sec:Introduction}
    The task of target circumnavigation has drawn extensive research attention in recent decades owing to its wide applications in both civil and military fields. %, such as target surveillance, satellite orbiting, and source pursuit; see for example \cite{li2020Cooperative,dunbabin2012Robots,fidan2013Adaptive,chen2010Localization}. 
    Circumnavigation tasks are typically performed by steering one or more agents toward and then on a circular trajectory centered at the target with a prescribed radius. As the target to be monitored in practical applications is often uncooperative such that its location is unknown to the agent(s), the circumnavigation task usually encompasses two sub-problems, namely, the target localization and the formation maneuver around the target. The literature often refers to this kind of uncooperative circumnavigation task as a dual control problem \cite{feldbaum1963Dual}. 
    
    % why bearing only
    In existing studies, the assumption that the sensing capability of the agent(s) provides information-rich measurements (such as distance, bearing angle, and velocity) of the target is frequently adopted to decouple the two sub-problems and thereby simplify the circumnavigation task; see for example \cite{kim2007Cooperative,shames2011Close,wang2017LimitCycleBased,zhang2019Coordinated,sen2021Circumnavigation,dou2021MovingTarget}. However, the circumnavigation tasks are often subject to measurement incompleteness due to practical circumstances such as limited payload, and therefore, it is preferred to restrict the available sensor knowledge in theoretical development. When the agent is required to remain in stealth or radio silence, bearing-only measurement is necessitated as it is a passive measurement technique \cite{deghat2010Target}. Due to this merit, extensive research efforts \cite{deghat2010Target,deghat2014Localization,qi2019Virtual,yu2019Bearing,chen2022Multicircular,chen2023Finitetime} have been devoted to the problem of bearing-only target localization and circumnavigation (abbrev. BoTLC hereafter).
    
    % why multiagent
    In comparison to single-agent systems, multi-agent systems are superior in meeting the increasingly complex and diverse circumnavigation task due to their enhanced robustness and versatility. A typical approach to coordinate the group of agents in BoTLC is to distribute the agents uniformly or distribute them according to a set of pre-defined separation angles on a single or multiple circles centered at the target, where the separation angle refers to the angle subtended at the target by two neighbor agents, see for instance \cite{swartling2014Collective,boccia2017Tracking,dou2019Distributed,chen2022Target,chen2022Multicircular,chen2023Finitetime,ma2023Finitetime}. Along a different train of thought, controllers used in \cite{zheng2015Enclosing,xiong2022Cooperative} drive the agents to form an evenly distributed circular formation using only a ratio of bearing angles subtended at an agent by its neighbor agents and the target, whereas \cite{yu2018Distributed,zhu2020Enclose,zou2022CoordinateFree} make use of heading angles of neighbor agents to achieve formation maneuver. In a different vein, authors in \cite{yu2019Bearing} employed the bearing rigidity theory to tackle the circumnavigation task. 
    
    % research gap: reliance on communication
    However, with the exception of \cite{yu2018Distributed,zheng2015Enclosing,xiong2022Cooperative}, all studies cited above assume there exists at least some local communication, to deliver necessary information, among agents. The reliance on inter-agent communication can be unfavorable in applications where the system is exposed to adversarial attacks targeting radio channels such as jamming or spoofing. The need for communication also significantly compromises the stealthiness of the system, which is one of the driving force to study the problem of BoTLC. Further, a communication-free system offers many appealing features such as not suffering from transmission errors such as latency, data collision, and packet loss, and an enhanced capability to be deployed in harsh environments such as underwater. Therefore, the aforementioned discussions prompt this paper to study the BoTLC problem under the assumption that no communication is permitted among agents. 
    
    % contribution
    \textit{Contributions.} A novel communication-free algorithm is proposed in this paper for arbitrarily spaced circular formation on the problem of multi-agent BoTLC. The paper's primary contributions are threefold: Firstly, to the best of the authors' knowledge, the proposed method represents the first result on a fully communication-free circumnavigation algorithm for a multi-agent system using solely bearing measurements. Avoiding communications among agents has significantly complicated the stability analysis and offers evident practical applications. In comparison to \cite{yu2019Bearing,swartling2014Collective,boccia2017Tracking,dou2019Distributed,chen2022Target,chen2022Multicircular,chen2023Finitetime,ma2023Finitetime}, the proposed controller does not require any inter-agent communication and uses strictly local on-board bearing measurements. In contrast to \cite{zheng2015Enclosing,xiong2022Cooperative}, global rather than local asymptotic stability is rigorously derived in this paper, and furthermore, the angular separation between agents can be arbitrarily assigned. Unlike \cite{yu2018Distributed,zhu2020Enclose,zou2022CoordinateFree}, the proposed controller does not need to use additional information of the heading angle of neighboring agents. Secondly, all measurements are within local coordinate frames that are not necessarily aligned. Last but not least, unlike most of the existing studies, agents are not required to possess any knowledge about the total number of agents in the formation.
    
    %The rest of this paper is structured as follows. The problem of BoTLC is formally stated in Section~\ref{sec:Preliminaries}. The proposed algorithm and stability analysis are given in Section~\ref{sec:Proposed_algorithm} and \ref{sec:Stability_proofs}, respectively. Section~\ref{sec:Simulation_results} shows the simulation results, and Section~\ref{sec:Conclusion} provides concluding remarks and future directions. 

    The rest of this paper is structured as follows. Section~\ref{sec:Preliminaries} formally states the BoTLC problem. The proposed solution is formulated in Section~\ref{sec:Proposed_algorithm}. Section~\ref{sec:Stability_proofs} is devoted to the stability analysis. Section~\ref{sec:Simulation_results} shows the simulation results, and Section~\ref{sec:Conclusion} provides concluding remarks.

%}

% ===================================
%            Preliminaries
% ===================================
\section{PRELIMINARIES AND PROBLEM STATEMENT}
\label{sec:Preliminaries}

    % ==================================
    %            Graph Theory
    % ==================================
    \subsection{Graph Theory}
    \label{sec:Graph_Theory}
        The interaction topology of the multi-agent system is described by a directed graph (digraph) $\mathcal{G}=(\mathcal{V},\mathcal{E},\mathcal{A})$. The group of agents are represented by the vertex set $\mathcal{V} = \{1,2,\dots,n\}$. $\mathcal{E} = \{e_{ij} = (i,j)\} \subseteq \mathcal{V} \times \mathcal{V}$ is the edge set where the edge $e_{ij} = (i,j) \in \mathcal{E}$ indicates that agent $i$ considers agent $j$ as its neighbor. The neighbor strategy will be explained later. The adjacency matrix is denoted by $\mathcal{A} = \left[ a_{ij} \right] \in \mathbb{R}^{n\times n}$ where $a_{ij}=1$ if $e_{ij}\in \mathcal{E}$ and $a_{ij}=0$ otherwise. We stipulate that $\mathcal{G}$ contains no self-loops, that is, $a_{ii}=0$. Let $\mathcal{N}_{i} = \left\{ j : e_{ij} \in \mathcal{E} \right\}$ denote the set of agent $i$'s neighbors. The Laplacian matrix of the digraph $\mathcal{G}$ is denoted by $\mathcal{L}(\mathcal{G}) = \left[\ell_{ij}\right]\in \mathbb{R}^{n\times n}$ where $\ell_{ij}=-a_{ij}$ for $i\neq j$ and $\ell_{ii} = \sum_{j=1}^{n} a_{ij}$.

    % ===================================
    %            Useful Lemmas
    % ===================================
    \subsection{Useful Lemmas}
    \label{sec:Useful_lemmas}
        %We present some lemmas that are conducive to the forthcoming stability analysis.
        \begin{lemma}[{\cite[Theorem~7\&8]{olfati-saber2004Consensus}}]
            \label{lemma:algebraic_connectivity}
            \normalfont If a digraph denoted by $\mathcal{G}$ is strongly connected, balanced, and nonswitching, then the generalized Fiedler's notion of algebraic connectivity $\lambda_2(\mathcal{G})$ has the following property on $\mathcal{G}$:
            \begin{equation}
                \label{eq:algebraic_connectivity}
                \inf_{\substack{\bm{x}\neq 0\\ \bm{1}_{n}^{\top}\bm{x}=0}} \medmath{\frac{\bm{x}^\top \mathcal{L}(\mathcal{G})\bm{x}}{\bm{x}^{\top}\bm{x}}} = \lambda_2(\mathcal{G}),
            \end{equation}
        \end{lemma}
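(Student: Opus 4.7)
The plan is to reduce the quotient in (\ref{eq:algebraic_connectivity}) to a standard eigenvalue problem for a symmetric matrix by symmetrizing $\mathcal{L}(\mathcal{G})$, and then apply Courant--Fischer.

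First I would write $\mathcal{L}_s := \tfrac{1}{2}(\mathcal{L}(\mathcal{G}) + \mathcal{L}(\mathcal{G})^\top)$ and observe that the antisymmetric part drops out of the quadratic form, so $\bm{x}^\top \mathcal{L}(\mathcal{G}) \bm{x} = \bm{x}^\top \mathcal{L}_s \bm{x}$ for every real $\bm{x}$. This is where the balanced hypothesis earns its keep: it guarantees that $\mathcal{L}_s$ is itself the Laplacian of the undirected \emph{mirror graph} $\hat{\mathcal{G}}$ whose edge weights are $\tfrac{1}{2}(a_{ij}+a_{ji})$, and in particular $\mathcal{L}_s$ is symmetric positive semidefinite with $\mathcal{L}_s \bm{1}_n = 0$. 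Without balancedness, $\mathcal{L}_s$ need not be a Laplacian at all.

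Next I would use strong connectivity of $\mathcal{G}$ (together with balancedness) to argue that $\hat{\mathcal{G}}$ is connected, so the eigenvalues $0 = \mu_1 < \mu_2 \le \cdots \le \mu_n$ of $\mathcal{L}_s$ satisfy $\mu_2 > 0$, and recall that the generalized Fiedler algebraic connectivity is by definition $\lambda_2(\mathcal{G}) := \mu_2$. Courant--Fischer applied to the symmetric $\mathcal{L}_s$, with the known eigenvector $\bm{1}_n$ of $\mu_1 = 0$ stripped off, then yields
\begin{equation*}
\mu_2 \;=\; \min_{\substack{\bm{x}\neq 0\\ \bm{1}_n^\top \bm{x} = 0}} \frac{\bm{x}^\top \mathcal{L}_s \bm{x}}{\bm{x}^\top \bm{x}},
\end{equation*}
with the minimum attained by compactness of the unit sphere intersected with $\{\bm{1}_n\}^\perp$. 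Combining this with the symmetrization identity from the first step lets us replace $\mathcal{L}_s$ by $\mathcal{L}(\mathcal{G})$ inside the Rayleigh quotient and delivers (\ref{eq:algebraic_connectivity}).

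The main obstacle I anticipate is conceptual rather than computational: pinning down that balancedness is exactly the hypothesis under which $\mathcal{L}_s$ retains the Laplacian structure, and then verifying that strong connectivity of $\mathcal{G}$ transfers to connectivity of $\hat{\mathcal{G}}$ (immediate, since any directed path in $\mathcal{G}$ is a walk in $\hat{\mathcal{G}}$). Once those points are in place, the remainder is routine Rayleigh quotient machinery; the only computation one actually performs is the identity $\bm{x}^\top (\mathcal{L}(\mathcal{G}) - \mathcal{L}(\mathcal{G})^\top) \bm{x} = 0$.
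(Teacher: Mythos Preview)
Your proposal is correct and follows exactly the line of argument in the cited reference (Olfati--Saber, 2004): symmetrize to $\mathcal{L}_s=\tfrac{1}{2}(\mathcal{L}+\mathcal{L}^\top)$, use balancedness so that $\mathcal{L}_s$ is the Laplacian of the mirror graph $\hat{\mathcal{G}}$, use strong connectivity so that $\hat{\mathcal{G}}$ is connected, and then invoke the Courant--Fischer characterization restricted to $\{\bm{1}_n\}^\perp$. The paper itself does not supply a proof of this lemma at all---it is imported wholesale from the cited source---so there is nothing further to compare; one minor caveat is that the paper's phrasing ``$0=\lambda_1<\lambda_2(\mathcal{G})\le\cdots\le\lambda_n$ are eigenvalues of $\mathcal{L}(\mathcal{G})$'' is slightly loose (for a non-symmetric $\mathcal{L}$ these need not be real), whereas your formulation correctly identifies $\lambda_2(\mathcal{G})$ as an eigenvalue of $\mathcal{L}_s$, which is the intended definition.
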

        where $\bm{x}=[x_1,\dots,x_n]^\top \in \mathbb{R}^n$, and $0=\lambda_1 < \lambda_2(\mathcal{G})\leq \dots \leq \lambda_n$ are eigenvalues of the Laplacian matrix $\mathcal{L}(\mathcal{G})$. 
        
        \begin{lemma}[\cite{sontag1989Remarks,loria2005Cascaded}]
            \label{lemma:cascaded system}
            Consider the cascaded nonlinear systems taking the form of
            \newsavebox{\mycases}% Store case "title" and brace
            \begin{subequations}
                \begin{align}
                    \sbox{\mycases}{$\displaystyle \Sigma\left\{\begin{array}{@{}c@{}}\vphantom{1\ x\geq0}\\\vphantom{0\ x<0}\end{array}\right.\kern-\nulldelimiterspace$}
                  \raisebox{-.5\ht\mycases}[0pt][0pt]{\usebox{\mycases}}\Sigma_1 \; : \; &\dot{\bm{x}}_1 = \bm{f}_1 ( \bm{x}_1 , \bm{x}_2 ), \label{eq:cascade_system_1} \\
                     \Sigma_2 \; : \; &\dot{\bm{x}}_2 = \bm{f}_2 ( \bm{x}_2 ). \label{eq:cascade_system_2}
                \end{align}
            \end{subequations}
            The origin of $\Sigma$ is globally asymptotically stable (GAS) if the following conditions are met:
            \begin{enumerate}[({C}1)]
                \item The origin of $\dot{\bm{x}}_1 = \bm{f}_1 (\bm{x}_1, \bm{0})$ is GAS;
                \item The origin of $\dot{\bm{x}}_2 = \bm{f}_2 (\bm{x}_2)$ is GAS;
                \item The solution of $\dot{\bm{x}}_1 = \bm{f}_1(\bm{x}_1, \bm{x}_2)$ is bounded in the sense of Converging Input - Bounded State (CIBS) \cite{sontag1989Remarks}, that is, for each input $\bm{x}_2 (t)$ such that $\lim\limits_{t\to\infty} \bm{x}_2(t) = \bm{0}$, and for each initial state $\bm{x}_1(0) = \bm{x}_{1,o}$, the solution of $\Sigma_1$ with $\bm{x}_{1,o}$ exists for all $t\geq 0$ and is bounded.
            \end{enumerate}
        \end{lemma}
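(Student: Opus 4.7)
The plan is to establish global asymptotic stability of the cascade by proving Lyapunov stability of the origin and global attractivity separately, exploiting the decoupled structure: since $\Sigma_2$ is autonomous, its solution $\bm{x}_2(t)$ is completely determined by $\bm{x}_2(0)$ and then serves as a time-varying input to $\Sigma_1$. I would then rely on condition (C3) as the bridge that prevents finite-escape time in $\Sigma_1$, and on conditions (C1)--(C2) to pin down the asymptotic behaviour.

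For Lyapunov stability at the origin, I would first fix any $\varepsilon>0$ and use (C2) to pick $\delta_2$ so that $\|\bm{x}_2(0)\|<\delta_2$ implies $\|\bm{x}_2(t)\|$ stays arbitrarily small for all $t\geq 0$. Then, because $\bm{f}_1$ is continuous and the origin of $\dot{\bm{x}}_1=\bm{f}_1(\bm{x}_1,\bm{0})$ is stable by (C1), a continuity argument on a sufficiently short time interval combined with the CIBS property of (C3) on the complementary interval yields a $\delta_1$ so that $\|\bm{x}_1(0)\|<\delta_1$ and $\|\bm{x}_2(0)\|<\delta_2$ together keep the joint state inside an $\varepsilon$-ball for all time.

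For global attractivity, the argument has three steps. Step~1: by (C2), every solution satisfies $\bm{x}_2(t)\to\bm{0}$ as $t\to\infty$. Step~2: by (C3), the associated $\bm{x}_1(t)$ is defined on $[0,\infty)$ and bounded, so its positive orbit is precompact and its $\omega$-limit set $\Omega$ is nonempty and compact. Step~3: using uniform continuity of $\bm{f}_1$ on compact sets together with $\bm{x}_2(t)\to\bm{0}$, one shows that $\Omega$ is positively invariant under the unperturbed flow $\dot{\bm{x}}_1=\bm{f}_1(\bm{x}_1,\bm{0})$. By (C1), every trajectory of this flow converges to the origin, which forces $\Omega=\{\bm{0}\}$ and hence $\bm{x}_1(t)\to\bm{0}$.

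The main obstacle is Step~3: proving that the $\omega$-limit set of a nonautonomous solution is invariant under the autonomous limit flow. This requires a careful diagonal/limit argument in which one extracts a convergent subsequence $\bm{x}_1(t_k)\to\bm{p}\in\Omega$, then compares the true solution starting from $\bm{x}_1(t_k)$ under the driven dynamics with the unperturbed solution starting from $\bm{p}$, exploiting Gr\"onwall-type estimates on a fixed finite horizon together with $\bm{x}_2(t_k+\cdot)\to\bm{0}$ uniformly on compact intervals. Without the boundedness supplied by (C3), the limit set could be empty or unbounded and the whole argument collapses; that is precisely why (C3) is imposed rather than derived from (C1)--(C2) alone.
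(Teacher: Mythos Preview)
The paper does not give its own proof of this lemma: it is stated as a known result and attributed to \cite{sontag1989Remarks,loria2005Cascaded} without further argument. So there is no in-paper proof to compare against; your proposal is an attempt to reconstruct the classical Sontag--Lor\'ia argument from scratch.

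Your attractivity argument (Steps~1--3) is the standard one and is correct in outline: boundedness from (C3) gives a nonempty compact $\omega$-limit set, asymptotic autonomy from $\bm{x}_2(t)\to\bm{0}$ makes that set invariant under the unperturbed $\bm{x}_1$-flow, and (C1) then forces it to be $\{\bm{0}\}$. Your identification of Step~3 as the delicate point, and the diagonal/Gr\"onwall scheme you describe, are exactly how Sontag handles it.

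The one place where your sketch is thin is the Lyapunov-stability half. You propose to combine ``a continuity argument on a sufficiently short time interval'' with ``the CIBS property of (C3) on the complementary interval,'' but (C3) only guarantees \emph{some} bound on $\bm{x}_1$, not a bound that can be made small by taking $\bm{x}_2(0)$ small. As written, nothing prevents $\bm{x}_1$ from making a large excursion after the short interval before eventually converging. The usual fix is a converse-Lyapunov/total-stability argument: since the origin of $\dot{\bm{x}}_1=\bm{f}_1(\bm{x}_1,\bm{0})$ is GAS, take a smooth Lyapunov function $V$ for it; continuity of $\bm{f}_1$ then lets you control $\dot V$ along the perturbed flow whenever $\bm{x}_2$ is sufficiently small, trapping $\bm{x}_1$ in an arbitrarily small sublevel set. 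With that adjustment your proposal matches the proofs in the cited references.
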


        \begin{lemma}[{\cite[Theorem~4.9]{khalil2014Nonlinear}}]
            \label{lemma:uniform stability}
            \normalfont Consider the system 
            \begin{equation}
                \label{eq:uniform stability}
                \dot{\bm{x}} = \bm{f}(t,\bm{x}).
            \end{equation}
            Suppose $\bm{x}=\bm{0}$ is an equilibrium point of \eqref{eq:uniform stability} and $D \subset \mathbb{R}^n$ is a domain containing $\bm{x}=\bm{0}$. Let $W_1(\bm{x})$, $W_2(\bm{x})$, and $W_3(\bm{x})$ be continuous positive definite functions on $D$, and let $V:[0,\infty) \times D \to \mathbb{R}$ be a continuously differentiable function such that 
            \begin{align}
                W_1(\bm{x}) &\leq V(t,\bm{x}) \leq W_2 (\bm{x}), \\
                \medmath{\frac{\partial V}{\partial t}} &+ \medmath{\frac{\partial V}{\partial \bm{x}}} \bm{f}(t,\bm{x}) \leq -W_3(\bm{x}),
            \end{align}
            for all $t\geq 0$ and for all $\bm{x}\in D$. Then, $\bm{x}=\bm{0}$ is uniformly asymptotically stable. Moreover, if $r$ and $c$ are chosen such that $B_r = \{||\bm{x}|| \leq r \} \subset D$ and $c < \min_{||\bm{x}||=r} W_1(\bm{x})$, then every trajectory starting in $\{\bm{x}\in B_r | W_2(\bm{x} )\leq c\}$ satisfies 
            \begin{equation}
                ||\bm{x}(t) || \leq \beta(||\bm{x}(t_0)||, \, t-t_0), \quad \forall t \geq t_0 \geq 0,
            \end{equation}
            for some class $\mathcal{K}\mathcal{L}$ function $\beta$. Finally, if $D=\mathbb{R}^n$ and $W_1(\bm{x})$ is radially unbounded, then $\bm{x}=\bm{0}$ is  uniformly globally asymptotically stable (UGAS).
        \end{lemma}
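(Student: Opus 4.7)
The plan is to combine a sublevel-set sandwich argument with a scalar differential comparison in order to extract, in sequence, uniform stability, uniform attractivity, and finally the explicit class $\mathcal{KL}$ bound. As a preparatory step, I would exploit the continuity and positive definiteness of $W_1, W_2, W_3$ on $D$ to produce class $\mathcal{K}$ functions $\alpha_1, \alpha_2, \alpha_3$ (defined on some interval $[0,r]$) satisfying $\alpha_1(\|\bm{x}\|) \leq W_1(\bm{x}) \leq V(t,\bm{x}) \leq W_2(\bm{x}) \leq \alpha_2(\|\bm{x}\|)$ and $W_3(\bm{x}) \geq \alpha_3(\|\bm{x}\|)$. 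This reduces the hypotheses to the canonical Lyapunov-with-class-$\mathcal{K}$-bounds form and lets me work with $V$ alone.

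For uniform stability, I would fix $r$ so that $B_r \subset D$, choose $c < \min_{\|\bm{x}\|=r} W_1(\bm{x})$, and note that the time-indexed sublevel set $\Omega_{t,c} := \{\bm{x} : V(t,\bm{x}) \leq c\}$ lies inside $B_r$ for every $t$. Since $\dot V \leq -W_3(\bm{x}) \leq 0$, any trajectory starting in $\Omega_{t_0,c}$ keeps $V$ non-increasing and therefore never leaves $B_r$. Applying the sandwich at the two endpoints converts this into the standard $\varepsilon$-$\delta$ statement with $\delta = \alpha_2^{-1}(\alpha_1(\varepsilon))$, independent of $t_0$, which is precisely uniform stability.

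For uniform attractivity and the $\mathcal{KL}$ bound, I would eliminate $\bm{x}$ from $\dot V \leq -\alpha_3(\|\bm{x}\|)$ using $\|\bm{x}\| \geq \alpha_2^{-1}(V(t,\bm{x}))$, obtaining a scalar differential inequality $\dot V \leq -\alpha(V)$ with $\alpha := \alpha_3 \circ \alpha_2^{-1}$ of class $\mathcal{K}$. The comparison lemma applied to $\dot y = -\alpha(y)$ produces a class $\mathcal{KL}$ function $\sigma$ with $V(t,\bm{x}(t)) \leq \sigma(V(t_0,\bm{x}(t_0)),\, t-t_0)$; sandwiching once more yields $\|\bm{x}(t)\| \leq \beta(\|\bm{x}(t_0)\|,\, t-t_0)$ with $\beta(r,s) := \alpha_1^{-1}(\sigma(\alpha_2(r), s))$, which is the claimed estimate on $\{\bm{x}\in B_r : W_2(\bm{x}) \leq c\}$ and in particular implies uniform asymptotic stability.

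Finally, the UGAS extension under $D=\mathbb{R}^n$ and radial unboundedness of $W_1$ follows by noting that $\alpha_1$ can then be chosen of class $\mathcal{K}_\infty$, so every sublevel set of $V$ is bounded and the constant $c$ in the preceding argument may be taken arbitrarily large; the construction of $\beta$ is unchanged. I expect the main technical obstacle to be the preparatory step, namely the clean construction of the $\alpha_i$ from mere continuity and positive definiteness (the standard trick uses $\alpha_i(s) := \inf_{\|\bm{x}\|\geq s} W_i(\bm{x})$ followed by a smoothing to obtain continuity and strict monotonicity), and the companion issue that $\alpha = \alpha_3 \circ \alpha_2^{-1}$ need not be Lipschitz, so the comparison step must be justified via monotonicity of the right-hand side rather than Picard uniqueness.
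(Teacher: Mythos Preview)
The paper does not supply its own proof of this lemma; it is quoted verbatim as Theorem~4.9 of Khalil's \emph{Nonlinear Systems} and used as an auxiliary tool in later arguments. Your proof plan is correct and is precisely the standard textbook route: reduce the $W_i$ to class-$\mathcal{K}$ comparison functions, use the sublevel-set sandwich for uniform stability, derive the scalar inequality $\dot V \leq -\alpha(V)$ and invoke the comparison lemma for the $\mathcal{KL}$ bound, and then lift to UGAS via radial unboundedness. There is nothing to compare against here beyond noting that your sketch matches Khalil's own argument.
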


        \begin{lemma}[{\cite[Theorem~9.1]{khalil2014Nonlinear}}] 
            \label{lemma:perturbed_sys}
            \normalfont Consider the nominal system $\Pi$ and the perturbed system $\Pi '$ given as follows,
            \begin{subequations}
                \begin{align}
                    \Pi \; : \; \dot{\bm{x}} &= \bm{q}(t,\bm{x}) ,  \label{eq:nominal sys} \\
                    \Pi ' \; : \; \dot{\bm{x}} &= \bm{q}(t,\bm{x}) + \bm{g}(t,\bm{x}) .  \label{eq:perturbed sys}
                \end{align}
            \end{subequations}
            Let $D$ be a domain that contains the origin, and suppose the following conditions are met:
            \begin{enumerate}[({C}1)]
                \item The perturbation term $\bm{g}(t,\bm{x})$ is piecewise continuous in $t$, locally Lipschitz in $\bm{x}$, and there exists a positive constant $\varpi$ such that
                \begin{equation}
                    \label{eq:bounded perturbation}
                    ||\bm{g}(t,\bm{x})|| \leq \varpi,\quad \forall (t,\bm{x}) \in [0,\infty) \times D;
                \end{equation}
                \item The origin of the nominal system \eqref{eq:nominal sys} is an exponentially stable equilibrium point;
                \item There exists a Lyapunov function $V(t,\bm{x})$ that satisfies the conditions of \textit{Lemma~\ref{lemma:uniform stability}} for the nominal system \eqref{eq:nominal sys} for $(t,\bm{x}) \in [0,\infty)\times D$ and $\{W_1(\bm{x}) \leq c\}$ is a compact subset of $D$.
            \end{enumerate}
            Let $\bm{y}(t)$ and $\bm{z}(t)$ denote solutions of the nominal system \eqref{eq:nominal sys} and the perturbed system \eqref{eq:perturbed sys}, respectively. Then, for each compact set $\Omega \subset \{W_2(\bm{x}) \leq \rho c, \, 0< \rho < 1\}$, there exist positive constants $\chi$, $\gamma$, $\mu$, and $k$, independent of $\varpi$, such that if $\bm{y}(t_0) \in \Omega$, $\delta < \eta$, and $||\bm{z}(t_0) - \bm{y}(t_0)|| < \mu$, the solutions $\bm{y}(t)$ and $\bm{z}(t)$ will be uniformly bounded for all $t\geq t_0 \geq 0$ and
            \begin{equation}
                ||\bm{z}(t) - \bm{y}(t)|| \leq k e^{-\gamma (t-t_0)} ||\bm{z}(t_0) - \bm{y}(t_0)|| + \chi \varpi .
            \end{equation}
        \end{lemma}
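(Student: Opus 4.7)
The plan is to prove this result by invoking the converse Lyapunov theorem for exponential stability and then running a standard Lyapunov/comparison argument on the error trajectory. Because the origin of the nominal system \eqref{eq:nominal sys} is exponentially stable, I would first appeal to the converse Lyapunov theorem (e.g.\ Khalil Thm.~4.14) to upgrade the function $V(t,\bm{x})$ of \textit{Lemma~\ref{lemma:uniform stability}} to one that, on a neighborhood of the origin, satisfies the quadratic estimates
\begin{align*}
    c_1 \|\bm{x}\|^2 &\leq V(t,\bm{x}) \leq c_2 \|\bm{x}\|^2, \\
    \tfrac{\partial V}{\partial t} + \tfrac{\partial V}{\partial \bm{x}} \bm{q}(t,\bm{x}) &\leq -c_3 \|\bm{x}\|^2, \\
    \bigl\| \tfrac{\partial V}{\partial \bm{x}} \bigr\| &\leq c_4 \|\bm{x}\|.
\end{align*}
The gradient bound $\|\partial V/\partial \bm{x}\| \leq c_4 \|\bm{x}\|$ is the critical addition: it is what turns the perturbation term $\bm{g}$ into a tractable forcing on $\dot V$ and is what makes the rest of the argument go through cleanly.

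Next, I would introduce the error $\bm{e}(t)=\bm{z}(t)-\bm{y}(t)$ and write its dynamics as
\[
\dot{\bm{e}} = \bigl[\bm{q}(t,\bm{y}+\bm{e}) - \bm{q}(t,\bm{y})\bigr] + \bm{g}(t,\bm{y}+\bm{e}).
\]
The unforced part of this system has the origin as an equilibrium, and for $\bm{y}\equiv\bm{0}$ it reduces to the nominal system, whose origin is ES. For general $\bm{y}(t_0)\in\Omega$ the nominal trajectory $\bm{y}(t)$ acts as a vanishing disturbance, since the nominal system is ES. Evaluating $V$ along $\bm{e}$, using the local Lipschitz continuity of $\bm{q}$ on the compact sublevel set (with some constant $L$), and invoking the hypothesis $\|\bm{g}\|\leq\varpi$, I would obtain a differential inequality of the form
\[
\dot V \leq -c_3 \|\bm{e}\|^2 + c_4 \|\bm{e}\|\bigl( L\|\bm{y}(t)\| + \varpi \bigr).
\]

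Splitting the cross term via Young's inequality yields $\dot V \leq -(1-\theta)(c_3/c_2) V + C_1 \|\bm{y}(t)\|^2 + C_2 \varpi^2$ for any $0<\theta<1$. The comparison lemma then integrates this inequality, and the exponential decay of $\|\bm{y}(t)\|$ (from exponential stability of the nominal system) guarantees that the $\|\bm{y}\|^2$ contribution enters only as a transient governed by $\|\bm{e}(t_0)\|$ after bounding $\|\bm{y}(t_0)\|\leq\|\bm{z}(t_0)\|+\|\bm{e}(t_0)\|$. Taking square roots and using $c_1\|\bm{e}\|^2\leq V$ delivers the advertised estimate
\[
\|\bm{z}(t)-\bm{y}(t)\| \leq k\,e^{-\gamma(t-t_0)}\,\|\bm{z}(t_0)-\bm{y}(t_0)\| + \chi\,\varpi.
\]

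The principal obstacle is guaranteeing that both $\bm{y}(t)$ and $\bm{z}(t)$ remain inside the domain $D$ on which the converse-Lyapunov estimates and the Lipschitz bound on $\bm{q}$ are valid, for all $t\geq t_0$. This invariance is not automatic and is precisely what forces the smallness hypotheses $\bm{y}(t_0)\in\Omega\subset\{W_2\leq\rho c\}$ with $\rho<1$, the bound $\|\bm{z}(t_0)-\bm{y}(t_0)\|<\mu$, and the smallness condition on $\varpi$. Establishing invariance of a slightly larger sublevel set of $V$ under both flows simultaneously, by showing that $\dot V<0$ on its boundary whenever $\varpi$ is small enough, is the technically delicate step and is what actually selects the constants $k$, $\gamma$, $\chi$, and $\mu$.
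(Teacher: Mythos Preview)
The paper does not prove this statement at all: Lemma~\ref{lemma:perturbed_sys} is stated in the preliminaries section as a quotation of \cite[Theorem~9.1]{khalil2014Nonlinear} and is used later as a black box in the proof of Lemma~\ref{lemma:tilde_beta_ij_is_bounded}. There is thus no ``paper's own proof'' to compare against.

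That said, your sketch is essentially the textbook argument. The one point worth sharpening is the passage where you evaluate the converse-Lyapunov function $V$ along the error $\bm{e}=\bm{z}-\bm{y}$ and then use the nominal-system inequality $\partial_t V + \partial_{\bm{x}} V\,\bm{q} \leq -c_3\|\bm{x}\|^2$. That inequality holds for $V(t,\bm{x})$ along the \emph{nominal} vector field, not for $V(t,\bm{e})$ along the error field $\bm{q}(t,\bm{y}+\bm{e})-\bm{q}(t,\bm{y})$; the latter is a different system whose equilibrium at $\bm{e}=\bm{0}$ does not automatically inherit the Lyapunov estimates. Khalil's actual route avoids this by differentiating $V$ along the \emph{perturbed} trajectory $\bm{z}(t)$ directly, obtaining an ultimate bound on $\|\bm{z}(t)\|$, and then combining this with the exponential decay of $\|\bm{y}(t)\|$ via the triangle inequality to bound $\|\bm{z}(t)-\bm{y}(t)\|$. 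Your invariance discussion at the end is on target and is indeed where the smallness of $\mu$ and $\varpi$ enters.
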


    \subsection{Background and Notations}
        Consider a 2D plane containing a group of $n\geq 2$ agents and a target to be localized and circumnavigated by the agents. The location of agent $i$, $i \in \mathcal{V}$, at time $t$ is denoted by $\bm{p}_i(t) = \left[x_i (t), y_i (t) \right]^\top \in \mathbb{R}^2$ and that of the stationary target by $\bm{x} = \left[x_t , y_t \right]^\top \in \mathbb{R}^2$.  The agents are viewed as holonomic moving points whose kinematics are described by the following single-integrator model,
        \begin{equation}
            \label{eq:agent_kinematics}
            \dot{\bm{p}}_i (t) = \bm{u}_i (t) , \; i \in \mathcal{V},
        \end{equation}
        where $\bm{u}_i(t)$ is the control input to be designed. 

        Agent $i$'s local bearing measurement of the target at time $t\geq 0$ is expressed in the form of a unit bearing vector $\bm{\varphi}_{iT}(t)$ starting from agent $i$ and pointing to the target, 
        \begin{equation}
            \label{eq:def_of_varphi_iT}
            \bm{\varphi}_{iT} (t) = \medmath{\frac{\bm{x}- \bm{p}_i (t)}{|| \bm{x} - \bm{p}_i (t) ||}} = \medmath{\frac{\bm{x} - \bm{p}_i (t)}{d_i (t)} }, \; i \in \mathcal{V},
        \end{equation} 
        where $d_i (t)$ is the Euclidean distance between agent $i$ and the target at time $t$. Similarly to \eqref{eq:def_of_varphi_iT}, let $\bm{\varphi}_{ij} (t) \in \mathbb{R}^2$ be the unit bearing vector on the line passing through $\bm{p}_i(t)$ and $\bm{p}_j(t)$, and $\psi_{ij}(t)\in[0,2\pi)$ be the angular spacing rotated counterclockwise from $\bm{\varphi}_{iT}(t)$ to $\bm{\varphi}_{ij}(t)$. Unit vector $\bar{\bm{\varphi}}_{iT} (t) \in \mathbb{R}^2$ is obtained by rotating $\bm{\varphi}_{iT}(t)$ clockwise by $\pi/2$. Angle $\beta_{ij}(t)\in [0,2\pi)$ represents the actual angular separation between agent $i$ and agent $j$ at time $t\geq 0$, which is not directly available to either agent $i$ or $j$. A special case of a two-agent system is depicted in Fig.~\ref{fig:notation_conventions}. Symbols $\bm{U}_{i}$, $\xi_{i}(t)$, $\gamma_{i}(t)$, and $\beta_{i}(t)$ will be explained in later sections.
        \begin{figure}[h]
            \centering
            \includegraphics[width=0.89\linewidth]{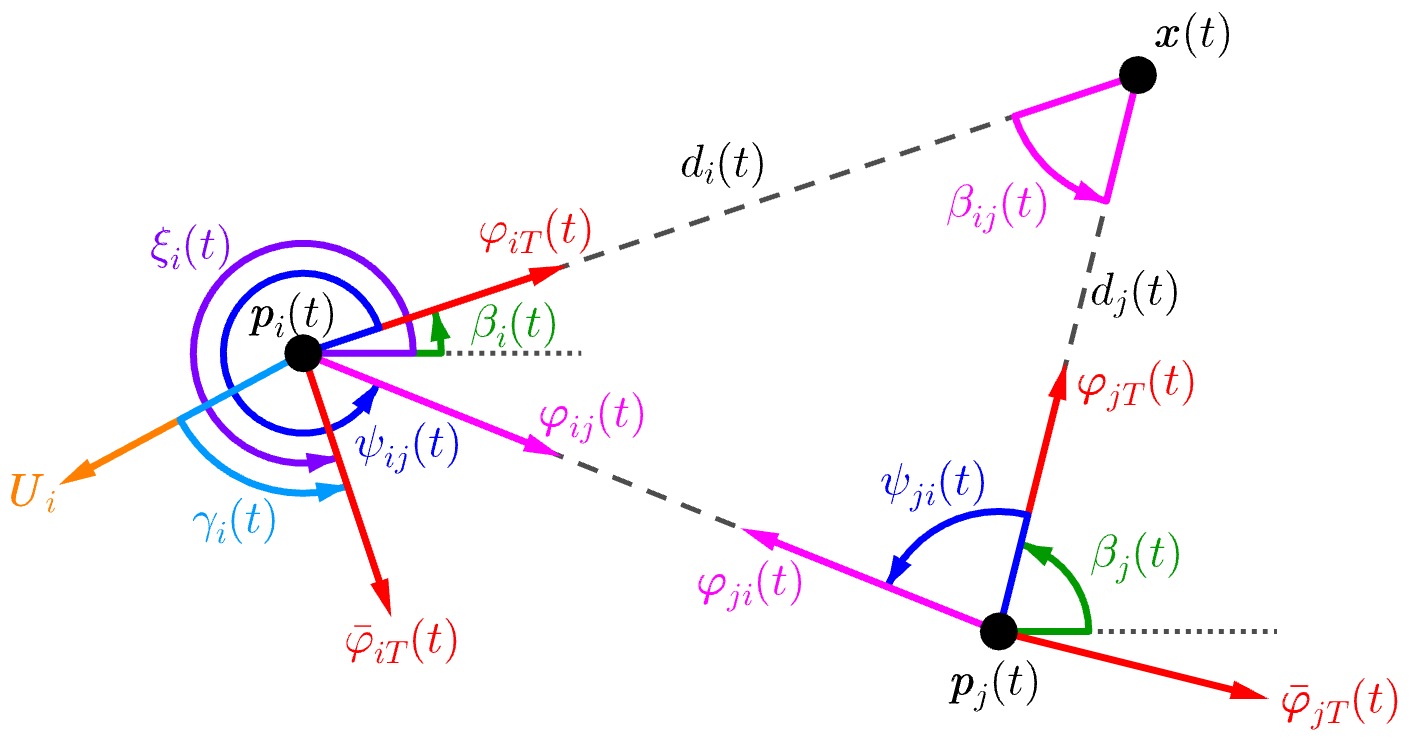}
            \caption{\label{fig:notation_conventions}A graphical illustration of notations.}
        \end{figure}
        
        %With the above preparation, we are in a good position to formulate the control problems of interest. 
    
    \subsection{Problem Statements and Assumptions}
        In this paper, we aim to simultaneously solve the following two problems.
        \begin{problem}[Localization]
            \label{problem:Localization_Problem}
            \normalfont Design a target estimator $\dot{\hat{\bm{x}}}_i (t)$ for $i\in \mathcal{V}$, such that each agent localizes a stationary target using only local bearing measurements. In particular, for all $i\in\mathcal{V}$, the estimation error $\tilde{\bm{x}}_i (t)$, defined as $\tilde{\bm{x}}_i(t) = \hat{\bm{x}}_i(t) - \bm{x}$, converges to zero.
        \end{problem}
        
        \begin{problem}[Circumnavigation and Circular Formation]
            \label{problem:Circum_n_Formation_Problem}
            \normalfont Given a circular formation characterized by a desired circumnavigation distance $d^{*}>0$ and a sequence of desired angular separation $\bm{\beta}^* = \left[ \beta_{12}^{*}, \beta_{23}^{*}, \dots, \beta_{n-1,n}^{*},\beta_{n,1}^{*} \right]^{\top}$, design a distributed control law $\bm{u}_i(t)$ such that for any multi-agent system with $n\geq 2$, each agent converges to the common circle centered at the target with the desired radius $d^{*}$ while circumnavigating the target and maintaining the desired angular spacing $\bm{\beta}^{*}$. In particular, the proposed control law should satisfy the following conditions:
            \begin{enumerate}[(1)]
                \item The tracking error converges to zero:
                \begin{equation}
                    \label{eq:distance error condition}
                    \lim_{t\to \infty} || \bm{x}  - \bm{p}_i (t) || - d^{*} = 0, \quad \forall \, i \in \mathcal{V}.
                \end{equation}
                \item The angular separation error $\tilde{b}_{i}(t)$, defined as $\tilde{b}_{i}(t) = \beta_{ij}(t) - \beta_{ij}^{*}$, converges to zero for all $(i,j)\in\mathcal{E}$.
            \end{enumerate}
        \end{problem}
        To address the above problems, we make the following assumptions.
        \begin{assumption}[Initial Positions]
            \label{assumption:Initial_positions}
            We assume that all agents and the target are occupying different locations at $t=0$. As the problem of collision avoidance between agents is not a focus of the current paper, we further assume that the initial positions of the group of agents are almost in order \cite{zhao2016Bearing,wang2017LimitCycleBased} such that no inter-agent collision could have evolved from the considered initial conditions.
        \end{assumption}

        \begin{assumption}[Zero Communication]
            \label{assumption:limited_interaction}
            \normalfont No direct communication, that is, any means of exchange of information, is permitted between agents. 
        \end{assumption}

        \begin{assumption}[Limited Sensing Capacity]
            \label{assumption:limited_sensing}
            Each agent $i\in\mathcal{V}$ only has access to \textit{local} bearing measurements $\bm{\varphi}_{iT}(t)$ and $\bm{\varphi}_{ij}(t)$, and has only one neighbor $j\in\mathcal{N}_i$ where $j=i+1$ if $i<n$ and $j=1$ if $i=n$, for all $t\geq 0$. %Therefore, no direct communication, that is, any means of exchange of information, is required between agents. 
        \end{assumption}

        \begin{remark}
            \label{remark:Graph_Theory}
            Given \textit{Assumption~\ref{assumption:limited_sensing}}, the digraph $\mathcal{G}$ is strongly connected, nonswitching, and balanced.
            % in the sense that 
            % \begin{equation}
            %     \label{eq:in-degree-out-degree-condition}
            %     \deg_{out}(i) = \deg_{in}(i), \quad \forall \, i \in \mathcal{V},
            % \end{equation}
            % where $\deg_{in}(i) := \sum_{j=1}^{n} a_{ij} = 1 = \sum_{j=1}^{n} a_{ji} =: \deg_{out}(i)$ denote the in-degree and out-degree of vertex $i$, respectively. Further, since each agent $i$ only has one neighbor, we will omit the term $\sum_{j=1}^{n} a_{ij}$ in all upcoming equations as it always equal to $1$.
        \end{remark}

        % \begin{assumption}[Limited Interaction]
        %     \label{assumption:limited_interaction}
        %     \normalfont It is assumed that no direct communication, that is, any means of exchange of information, is permitted between agents. %The only allowed means for an agent to get information about its neighbours is through sensing its neighbours' bearing. 
        % \end{assumption}
        
        \begin{assumption}[A Priori Knowledge]
             We assume for each agent $i$, the location of other agents and that of the target are not known \textit{a priori}. But it is assumed that each agent $i$ knows \textit{a priori} the desired circumnavigation distance $d^* > 0$ and the desired angular separation $\beta_{ij}^{*}\in [0,2\pi)$. In addition, we assume each agent does not have knowledge about the total number of agents, $n$.
        \end{assumption}

\section{Proposed Algorithm}
\label{sec:Proposed_algorithm}
    Adopting the target estimator proposed in \cite{deghat2010Target,deghat2014Localization}, agent $i$'s estimate of the target location is obtained by
    \begin{equation}
        \label{eq:target_estimator}
        \dot{\hat{\bm{x}}}_i(t) = k_{est} \left( \bm{I}_{2} - \bm{\varphi}_{iT}(t) \bm{\varphi}_{iT}^\top (t) \right) \left( \bm{p}_i(t) - \hat{\bm{x}}_i (t) \right) ,
    \end{equation}
    where $k_{est} \in \mathbb{R}^{+}$ is an estimation gain, and $\bm{I}_{2}$ is a $2\times 2$ identity matrix.

    To distribute agents on the desired circle around the target, we employ an estimate $\hat{\beta}_{ij}(t) \in [-\pi,3\pi)$, which approximates the actual angular separation $\beta_{ij}(t)$ using the following equation,
    \begin{equation}
        \label{eq:angle_estimator}
        \hat{\beta}_{ij}(t) = \begin{cases}
            2\psi_{ij}(t) -3\pi & \text{if } \psi_{ij}(t) \geq \pi , \\
            2\psi_{ij}(t) + \pi & \text{otherwise.}
        \end{cases}
    \end{equation}
    %Comparing to communication-based $\beta_{ij}(t)$, the estimate $\hat{\beta}_{ij}(t)$ can be acquired locally at the expense of complicating the stability analysis and slowing the convergence (see Section~\ref{sec:Simulation_results}).
    
    A control law should be designed such that all $\hat{\beta}_{ij}$ converge to $\beta_{ij}$ and eventually to the desired values $\beta_{ij}^{*}$. With this goal in consideration, we propose the following controller:
    \begin{equation}
        \label{eq:controller_design}
        \resizebox{.88\hsize}{!}{$\bm{u}_i (t) = k_c \left( \hat{d}_i(t)-d^*\right) \bm{\varphi}_{iT}(t) + k_{\omega}\left( \alpha + \tilde{\beta}_{i}(t) \right) \bar{\bm{\varphi}}_{iT}(t)$},
    \end{equation}
    where $\hat{d}_i(t) = ||\bm{p}_i(t) - \hat{\bm{x}}_i(t)||$, $\tilde{\beta}_{i}(t)=\hat{\beta}_{ij}(t) - \beta_{ij}^{*}$, and $k_c, k_{\omega} > 0$ and $\alpha$ are control constants in which $\alpha$ should be chosen to meet
    \begin{equation}
        \label{eq:alpha_condition}
        \alpha - \sup\limits_{i\in \mathcal{V},t\geq 0} |\tilde{\beta}_{i}(t)| \geq \kappa_{\alpha} > 0,
    \end{equation}
    where $\kappa_{\alpha}$ is some small positive constant.

\section{Stability Proofs}
\label{sec:Stability_proofs}
    In this section, we show that the proposed estimators \eqref{eq:target_estimator}, \eqref{eq:angle_estimator}, and controller \eqref{eq:controller_design} solve \textit{Problem~\ref{problem:Localization_Problem}} and \textit{Problem~\ref{problem:Circum_n_Formation_Problem}}. To this end, we first propose some lemmas.

    \begin{lemma}
        \label{lemma:bounded_estimation_error_x_tilde}
        Under the target estimator \eqref{eq:target_estimator} and the controller \eqref{eq:controller_design}, the estimation error in target's position is bounded by 
        \begin{equation}
            \label{eq:bounded estimation error}
            ||\tilde{\bm{x}}_i (t) || \leq || \tilde{\bm{x}}_i (0) ||, \quad \forall \, i \in \mathcal{V} \wedge t \geq 0.
        \end{equation}
    \end{lemma}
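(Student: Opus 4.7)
The plan is to set up the error dynamics $\dot{\tilde{\bm{x}}}_i$, show they reduce to a linear time-varying system of the form $-k_{est} P_i(t)\tilde{\bm{x}}_i$ where $P_i(t)$ is a projection matrix (hence positive semidefinite), and then conclude via a quadratic Lyapunov function.

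First I would observe that the target is stationary, so $\dot{\tilde{\bm{x}}}_i = \dot{\hat{\bm{x}}}_i$. The critical algebraic step is rewriting $\bm{p}_i - \hat{\bm{x}}_i = (\bm{p}_i - \bm{x}) - \tilde{\bm{x}}_i$ inside the estimator \eqref{eq:target_estimator}, then noting that by the definition \eqref{eq:def_of_varphi_iT}, the vector $\bm{p}_i - \bm{x} = -d_i \bm{\varphi}_{iT}$ lies in the range of $\bm{\varphi}_{iT}$ and is therefore annihilated by the orthogonal projector $P_i := \bm{I}_2 - \bm{\varphi}_{iT}\bm{\varphi}_{iT}^\top$. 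This yields the clean error dynamics
\begin{equation*}
    \dot{\tilde{\bm{x}}}_i(t) = -k_{est}\bigl(\bm{I}_2 - \bm{\varphi}_{iT}(t)\bm{\varphi}_{iT}^\top(t)\bigr)\tilde{\bm{x}}_i(t).
\end{equation*}
Notice that the controller $\bm{u}_i$ enters only through $\bm{\varphi}_{iT}(t)$, which appears inside a positive semidefinite projector, so the specific form of \eqref{eq:controller_design} plays no role in this bound; all that is needed is that $\bm{\varphi}_{iT}(t)$ remains a well-defined unit vector, which holds as long as agent $i$ does not coincide with the target.

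Next I would pick the Lyapunov candidate $V_i(t) = \tfrac{1}{2}\|\tilde{\bm{x}}_i(t)\|^2$ and differentiate along the error dynamics to obtain
\begin{equation*}
    \dot{V}_i(t) = -k_{est}\,\tilde{\bm{x}}_i^\top(t)\bigl(\bm{I}_2 - \bm{\varphi}_{iT}(t)\bm{\varphi}_{iT}^\top(t)\bigr)\tilde{\bm{x}}_i(t) \leq 0,
\end{equation*}
since $P_i(t)$ is an orthogonal projection matrix with eigenvalues in $\{0,1\}$ and $k_{est}>0$. Monotonicity of $V_i$ then gives $V_i(t) \leq V_i(0)$ for all $t\geq 0$, which is exactly \eqref{eq:bounded estimation error} after taking square roots.

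There is no serious obstacle here: the only subtlety worth flagging is the need to verify that $\bm{\varphi}_{iT}(t)$ remains well defined for all $t \geq 0$, i.e.\ that agent $i$ never coincides with the target. This is inherited from \textit{Assumption~\ref{assumption:Initial_positions}} together with the fact that $\hat{d}_i$ (and hence agent $i$'s radial velocity under \eqref{eq:controller_design}) cannot drive $\bm{p}_i$ through $\bm{x}$ in finite time; a brief remark to this effect suffices, since the lemma concerns only boundedness of $\tilde{\bm{x}}_i$, which is established purely from the semidefiniteness of the projector.
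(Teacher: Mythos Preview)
Your proof is correct and is exactly the standard argument the paper has in mind: the paper omits the proof, referring to \textit{Lemma~3} of \cite{deghat2010Target}, whose argument is precisely the projector-plus-quadratic-Lyapunov computation you carried out. Your remark that well-definedness of $\bm{\varphi}_{iT}(t)$ is the only subtlety is also consistent with the paper, which addresses that point separately in \textit{Lemma~\ref{lemma:d_i_is_bounded}} immediately afterward.
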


    \begin{proof}
        The proof of \textit{Lemma~\ref{lemma:bounded_estimation_error_x_tilde}} is similar to that of \textit{Lemma~3} in \cite{deghat2010Target} and is therefore omitted.
    \end{proof}
    
    Because the target estimator \eqref{eq:target_estimator} and the controller \eqref{eq:controller_design} make use of the signal $\bm{\varphi}_{iT}(t)$, we should show that $\bm{\varphi}_{iT}(t)$ in \eqref{eq:def_of_varphi_iT} is well-defined for all $t\geq 0$. A sufficient condition is proposed in \textit{Lemma~\ref{lemma:d_i_is_bounded}} below.

    \begin{lemma}
        \label{lemma:d_i_is_bounded}
        Under the target estimator \eqref{eq:target_estimator} and the controller \eqref{eq:controller_design}, for some $d_{i}^{max}>0$ and some $d_{i}^{min}$ satisfying $0<d_{i}^{min}<d^*$, if the initial conditions satisfy
        \begin{align}
            \label{eq:initial_estimate_condition}
            \begin{split}
                d_i(0) \geq d_{i}^{min}&, \quad \forall \, i \in \mathcal{V},\\
                 || \tilde{\bm{x}}_i (0) || = || \hat{\bm{x}}_i (0) - \bm{x}|| \leq &d^{*}-d_{i}^{min}, \quad \forall \, i \in \mathcal{V},
            \end{split}
        \end{align}
        then the agent-target distance $d_i(t)$ is bounded by 
        \begin{equation}
            \label{eq:d_i_is_bounded}
            d_{i}^{min} \leq d_i(t) \leq d_{i}^{max}, \quad \forall \, i\in\mathcal{V} \wedge \forall \, t\geq 0.
        \end{equation}
    \end{lemma}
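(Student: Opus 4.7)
The plan is to reduce the problem to a scalar comparison argument on $d_i(t)$ by exploiting the orthogonal split of the controller \eqref{eq:controller_design} into a radial and a tangential part, and then to close the lower bound by an invariant-set (barrier) argument that uses \textit{Lemma~\ref{lemma:bounded_estimation_error_x_tilde}} to control the mismatch between $\hat d_i$ and $d_i$.

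First, I would differentiate $d_i(t)=\|\bm x-\bm p_i(t)\|$ along the closed-loop kinematics. A direct calculation gives $\dot d_i = -\bm\varphi_{iT}^{\top}\bm u_i$, and since $\bar{\bm\varphi}_{iT}\perp\bm\varphi_{iT}$ by construction, the tangential term of \eqref{eq:controller_design} drops out and we are left with the one-dimensional equation
\begin{equation}
    \dot d_i(t) \;=\; -k_c\bigl(\hat d_i(t)-d^{*}\bigr).
    \label{eq:plan_ddot}
\end{equation}
Next, writing $\bm p_i-\hat{\bm x}_i = (\bm p_i-\bm x) + (\bm x-\hat{\bm x}_i)$ and applying the triangle inequality yields the key bound $|\hat d_i(t)-d_i(t)|\le \|\tilde{\bm x}_i(t)\|$, and \textit{Lemma~\ref{lemma:bounded_estimation_error_x_tilde}} gives $\|\tilde{\bm x}_i(t)\|\le\|\tilde{\bm x}_i(0)\|\le d^{*}-d_i^{\min}$ for all $t\ge 0$ under the hypothesis \eqref{eq:initial_estimate_condition}.

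For the lower bound, I would argue by contradiction. Suppose $d_i(t_1)<d_i^{\min}$ for some $t_1>0$. Since $d_i(0)\ge d_i^{\min}$ and $d_i(t)$ is continuous, there exists a first crossing time $t_0\in[0,t_1)$ with $d_i(t_0)=d_i^{\min}$ and $\dot d_i(t_0)\le 0$. But at $t_0$ the triangle inequality plus the bound on $\|\tilde{\bm x}_i\|$ gives
\begin{equation}
    \hat d_i(t_0)\;\le\; d_i(t_0)+\|\tilde{\bm x}_i(t_0)\|\;\le\; d_i^{\min}+(d^{*}-d_i^{\min})\;=\;d^{*},
\end{equation}
so by \eqref{eq:plan_ddot} we have $\dot d_i(t_0)=-k_c(\hat d_i(t_0)-d^{*})\ge 0$, and in fact if $\hat d_i(t_0)<d^{*}$ the inequality is strict, contradicting the trajectory descending below $d_i^{\min}$. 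The boundary case $\hat d_i(t_0)=d^{*}$ (so $\dot d_i(t_0)=0$) is handled by noting that this equality forces $\|\tilde{\bm x}_i(t_0)\|=d^{*}-d_i^{\min}=\|\tilde{\bm x}_i(0)\|$ and $\tilde{\bm x}_i(t_0)$ to be anti-aligned with $\bm\varphi_{iT}(t_0)$; a standard invariance-type argument then shows the trajectory cannot cross, so $d_i(t)\ge d_i^{\min}$ for all $t\ge 0$.

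For the upper bound, the same pair of inequalities yields $\hat d_i(t)\ge d_i(t)-\|\tilde{\bm x}_i(0)\|$, so whenever $d_i(t)\ge d^{*}+\|\tilde{\bm x}_i(0)\|$ we have $\dot d_i(t)\le 0$. Choosing
\begin{equation}
    d_i^{\max} \;:=\; \max\bigl\{\,d_i(0),\; d^{*}+\|\tilde{\bm x}_i(0)\|\,\bigr\}
\end{equation}
and repeating the barrier argument then gives $d_i(t)\le d_i^{\max}$ for all $t\ge 0$. The main obstacle I anticipate is the degenerate boundary case in the lower-bound argument where $\dot d_i$ vanishes exactly at the crossing time; ruling out a tangential exit requires either the invariance observation above or, more cleanly, taking the hypothesis on $\|\tilde{\bm x}_i(0)\|$ to be strict ($<d^{*}-d_i^{\min}$), which makes $\dot d_i>0$ on the barrier and closes the argument without subtlety.
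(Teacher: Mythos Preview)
Your proposal is essentially correct and rests on the same two building blocks the paper uses: the orthogonality of $\bm\varphi_{iT}$ and $\bar{\bm\varphi}_{iT}$, which reduces the radial dynamics to the scalar equation $\dot d_i=-k_c(\hat d_i-d^*)$, and the triangle-inequality bound $|\hat d_i-d_i|\le\|\tilde{\bm x}_i\|\le\|\tilde{\bm x}_i(0)\|$ coming from \textit{Lemma~\ref{lemma:bounded_estimation_error_x_tilde}}. Where you and the paper diverge is in how you close the bounds. You run a first-crossing/barrier argument; the paper instead rewrites the radial dynamics as $\dot\delta_i=-k_c\,\delta_i+k_c\,\rho_i$ with $\rho_i=d_i-\hat d_i$, solves it explicitly by variation of parameters,
\[
d_i(t)=d_i(0)e^{-k_ct}+d^*(1-e^{-k_ct})+\int_0^t k_c\,e^{-k_c(t-\tau)}\rho_i(\tau)\,d\tau,
\]
and then bounds the integral using $|\rho_i|\le\|\tilde{\bm x}_i(0)\|$ to obtain $d_i(t)\ge d_i(0)e^{-k_ct}+(d^*-\|\tilde{\bm x}_i(0)\|)(1-e^{-k_ct})\ge d_i^{\min}$ directly, as a convex combination of two numbers each at least $d_i^{\min}$.

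The trade-off is exactly the one you identify yourself: your barrier route avoids solving the ODE, but pays for it with the degenerate case $\dot d_i(t_0)=0$, which needs an invariance argument or a strict hypothesis to rule out a tangential exit. The paper's explicit-solution route sidesteps this entirely, since the inequality $d_i(t)\ge d_i^{\min}$ falls out without any strictness assumption and no separate boundary analysis. If you prefer to keep your approach, a clean way to handle the boundary is to note that the scalar ODE satisfies the differential inequality $\dot d_i\ge -k_c(d_i-d^*)-k_c\|\tilde{\bm x}_i(0)\|$ and apply a Gronwall/comparison lemma, which yields the same convex-combination bound as the paper without an explicit first-crossing analysis.
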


    \begin{proof}
        \label{proof:d_i_is_bounded}
        We first define two auxiliary variables, 
        \begin{subequations}
            \label{eq:auxiliary_vars}
            \begin{align}
                \delta_i(t) &= d_i (t) - d^*,  \label{eq:delta_i_def} \\
                \rho_i (t) &= d_i (t) - \hat{d}_i (t) , \label{eq:rho_i_def}
            \end{align}
        \end{subequations}
        and let $\bm{\delta}(t) = [\delta_1(t),\dots,\delta_n(t)]^\top$. Using triangular inequality and \eqref{eq:rho_i_def}, we have $ ||\tilde{\bm{x}}_i(t) || \geq |d_i (t) - \hat{d}_i (t) |$, which immediately implies using \textit{Lemma~\ref{lemma:bounded_estimation_error_x_tilde}} that % mistake corrected according to Reviewer 6's comment
        \begin{equation}
            \label{eq:rho_is_bounded}
            |\rho_i (t)| \leq ||\tilde{\bm{x}}_i (t)|| \leq || \tilde{\bm{x}}_i (0) ||.
        \end{equation}
        The dynamics of $ \delta_i (t)$ can be expressed using \eqref{eq:def_of_varphi_iT} and \eqref{eq:controller_design} as (see also \cite[Eq.~(3)]{deghat2010Target}),
        \begin{equation}
            \label{eq:delta_dynamics}
            \dot{\delta}_i (t) = - \delta_i (t) + \rho_i (t) . 
        \end{equation}
        Using \eqref{eq:auxiliary_vars} and \eqref{eq:delta_dynamics}, the agent-target distance $d_i(t)$ can be explicitly found as,
        \begin{equation}
            \label{eq:d_i_solution}
            d_i (t) =  d^* + \delta_i (0) e^{-t} + \int_{0}^{t} e^{-(t-\tau)}\rho_i (\tau) \, d\tau  .
        \end{equation}
        Since $\rho_i(t)$ is bounded by \eqref{eq:rho_is_bounded}, $d_i(t)$ is upper bounded by some $d_{i}^{max}>0$. Further, using \eqref{eq:rho_is_bounded} and \eqref{eq:d_i_solution}, we have,
        \begin{align*}
            d_i(t) \overset{\eqref{eq:rho_is_bounded},\eqref{eq:d_i_solution}}&{\geq} d_i(0) e^{-t} + \left(d^* - ||\tilde{\bm{x}}_i(0)||\right) \left(1-e^{-t}\right) \\
            \overset{\eqref{eq:initial_estimate_condition}}&{\geq} d_{i}^{min} \left(e^{-t} + 1 - e^{-t}\right) = d_{i}^{min} . \numberthis \label{eq:d_i_lower_bound}
        \end{align*}
        Therefore, $d_i(t)$ is lower bounded by $d_{i}^{min}$.
    \end{proof}

    \begin{lemma}
        \label{lemma:varphi_is_persistent_exciting}
        Under the target estimator \eqref{eq:target_estimator} and the controller  \eqref{eq:controller_design}, the signal $\bar{\bm{\varphi}}_{iT}(t)$ is persistently exciting (p.e.) for all $i\in \mathcal{V}$ and $\forall \, t > 0$.
    \end{lemma}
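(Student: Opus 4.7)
The plan is to establish persistent excitation by showing that the unit vector $\bm{\varphi}_{iT}(t)$, and hence its fixed $\pi/2$-rotated partner $\bar{\bm{\varphi}}_{iT}(t)$, rotates around the target at an angular rate that is uniformly bounded away from zero and from above. Parameterizing $\bm{\varphi}_{iT}(t)=[\cos\theta_i(t),\sin\theta_i(t)]^\top$ and $\bar{\bm{\varphi}}_{iT}(t)=[\sin\theta_i(t),-\cos\theta_i(t)]^\top$, I would differentiate the defining relation $\bm{\varphi}_{iT}=(\bm{x}-\bm{p}_i)/d_i$ with $\dot{\bm{p}}_i$ replaced by the controller \eqref{eq:controller_design}, obtaining $\dot{\bm{\varphi}}_{iT}=-(I_2-\bm{\varphi}_{iT}\bm{\varphi}_{iT}^\top)\bm{u}_i/d_i$. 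The radial component of $\bm{u}_i$ is annihilated by the projector, so only the tangential term survives and, after comparing with $\dot{\bm{\varphi}}_{iT}=-\dot{\theta}_i\bar{\bm{\varphi}}_{iT}$, yields the scalar identity $\dot{\theta}_i(t)=k_\omega(\alpha+\tilde{\beta}_i(t))/d_i(t)$.

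Next I would combine the design condition \eqref{eq:alpha_condition}, which forces $\kappa_\alpha\le\alpha+\tilde{\beta}_i(t)\le 2\alpha-\kappa_\alpha$ uniformly in $t$, with \textit{Lemma~\ref{lemma:d_i_is_bounded}}, which guarantees $d_i^{min}\le d_i(t)\le d_i^{max}$. Together these furnish two strictly positive constants $\omega_{min}=k_\omega\kappa_\alpha/d_i^{max}$ and $\omega_{max}=k_\omega(2\alpha-\kappa_\alpha)/d_i^{min}$ such that $0<\omega_{min}\le\dot{\theta}_i(t)\le\omega_{max}<\infty$ for all $t>0$. In particular, $\theta_i(t)$ is strictly monotone and hence invertible in $t$.

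Finally, fix any $T\ge 2\pi/\omega_{min}$ so that on every window $[t,t+T]$ the angle $\theta_i$ advances by at least $2\pi$, i.e., $\bar{\bm{\varphi}}_{iT}$ completes at least one full revolution. For an arbitrary unit vector $\bm{v}\in\mathbb{R}^2$, changing the variable of integration from $\tau$ to $\theta$ and using $1/\dot{\theta}_i\ge 1/\omega_{max}$ gives $\bm{v}^\top\!\!\int_t^{t+T}\!\!\bar{\bm{\varphi}}_{iT}\bar{\bm{\varphi}}_{iT}^\top d\tau\,\bm{v}\ge(1/\omega_{max})\int_0^{2\pi}(v_1\sin\theta-v_2\cos\theta)^2 d\theta=\pi/\omega_{max}$, where the last equality uses the elementary identity $\int_0^{2\pi}\bar{\bm{\varphi}}(\theta)\bar{\bm{\varphi}}^\top(\theta)\,d\theta=\pi I_2$. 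This yields the uniform lower bound $(\pi/\omega_{max})I_2$ on the integrated kernel, which is precisely the persistent-excitation condition.

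The main obstacle is securing the strict positivity of $\dot{\theta}_i$ uniformly in time; without the lower bound $\kappa_\alpha>0$ from \eqref{eq:alpha_condition} this can fail at the level of the numerator, and without the finite upper bound on $d_i(t)$ from \textit{Lemma~\ref{lemma:d_i_is_bounded}} the denominator could grow and drive $\dot{\theta}_i$ toward zero. Once both bounds are in place, the remainder of the argument is a routine change of variables followed by a trigonometric integration over a complete rotation.
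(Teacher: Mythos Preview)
Your proposal is correct and follows essentially the same route as the paper: both arguments reduce to showing that the angular rate satisfies $\dot{\theta}_i(t)=k_\omega(\alpha+\tilde{\beta}_i(t))/d_i(t)\ge k_\omega\kappa_\alpha/d_i^{max}>0$ by combining \eqref{eq:alpha_condition} with \textit{Lemma~\ref{lemma:d_i_is_bounded}}, and then conclude p.e.\ from the resulting monotone rotation. Your treatment is in fact more explicit than the paper's, which stops at ``$\gamma_i(t)$ is monotonically increasing, and it follows that there always exist $\sigma_1,T>0$''; your change-of-variables step using the upper bound $\omega_{max}$ and the identity $\int_0^{2\pi}\bar{\bm{\varphi}}\bar{\bm{\varphi}}^\top d\theta=\pi I_2$ supplies the concrete constant $\pi/\omega_{max}$ that the paper leaves implicit.
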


    \begin{proof}
        \label{proof:varphi_is_persistent_exciting}
        According to \cite{sastry1994Adaptive}, the signal $\bar{\bm{\varphi}}_{iT}(t)$ is considered p.e. if there exist positive constants $\sigma_1$, $\sigma_2$, $T$ such that
        \begin{equation}
            \label{eq:scalar form of p.e. condition}
            \sigma_1 \leq \int_{t_0}^{t_0+T} \left( \bm{U}_{i}^{\top} \bar{\bm{\varphi}}_{iT}(t) \right)^2 \, dt \, \leq \sigma_2 
        \end{equation}
        holds for all constant unit vector $\bm{U}_i \in \mathbb{R}^2$ (see Fig.~\ref{fig:notation_conventions}) and any positive constant $t_0$. Equation \eqref{eq:scalar form of p.e. condition} can be expressed in terms of the angle $\gamma_i (t)$ as
        \begin{equation}
            \label{eq:p.e. condition (cos form)}
            \sigma_1 \leq \int_{t_0}^{t_0 + T} \cos^2 \gamma_i (t) \, dt\, \leq \sigma_2 ,
        \end{equation}
        where $\gamma_i (t)$ is the angle from the unit vector $\bm{U}_i$ to the unit vector $\bm{\varphi}_{iT}(t)$, as depicted in Fig.~\ref{fig:notation_conventions}. Noticing that $\bar{\bm{\varphi}}_{iT}(t) \perp \bm{\varphi}_{iT}(t)$ and the angle $\xi_i(t) - \gamma_i(t)$ is always constant, we have
        \begin{equation}
            \label{eq:pe_condition_inter_1}
            \frac{d\gamma_i(t)}{dt} = \frac{d\xi_i (t)}{dt} = \frac{d\beta_i (t)}{dt},
        \end{equation}
        where $\beta_i(t)$ is the angle rotated counterclockwise from the $x$-axis of the $i$'s local frame to the unit vector $\bm{\varphi}_{iT}(t)$, as illustrated in Fig.~\ref{fig:notation_conventions}. For the purpose of stability analysis \textit{only}, we assume that all agents maintain a local frame aligned to the global frame with the origin fixed at $\bm{x}$. Note that the controller \eqref{eq:controller_design} does not use any term involving $\beta_{i}(t)$. Using \eqref{eq:pe_condition_inter_1} and \eqref{eq:controller_design}, we obtain
        \begin{equation}
            \frac{d\gamma_i (t)}{dt} = \frac{d\beta_i (t)}{dt} =  \medmath{ \frac{ k_{\omega}\left(\alpha + \tilde{\beta}_{i}(t) \right)}{d_i(t)} } \overset{\eqref{eq:alpha_condition},\eqref{eq:d_i_is_bounded}}{\geq} \medmath{\frac{k_\omega \kappa_{\alpha} }{d_{i}^{max}}}.
        \end{equation}
        %From \textit{Lemma~\ref{lemma:d_i_is_bounded}}, we know that $d_i(t)$ is bounded. By \eqref{eq:alpha_condition}, we know that $\alpha + \tilde{\beta}_{i}(t) \geq \kappa_{\alpha}$ for all $t\geq 0$. It then follows that
        % \begin{equation}
        %     \label{eq:dgamma/dt}
        %     \frac{d\gamma_i (t)}{dt} \geq \frac{k_\omega \kappa_{\alpha} }{d_{i}^{max}}.
        % \end{equation}
        Consequently, it holds for all $t\geq 0$ that
        \begin{equation}
            \gamma_i (t + t_0) \geq \gamma_i (t_0) + \medmath{\frac{k_\omega \kappa_{\alpha} t} {d_{i}^{max}}} ,
        \end{equation}
        which means $\gamma_i (t)$ is monotonically increasing, and it follows that there always exist $\sigma_1 , T >0$ that satisfy \eqref{eq:p.e. condition (cos form)}.
    \end{proof}

    \begin{lemma}
        \label{lemma:tilde_beta_ij_is_bounded}
        The angular separation error $\tilde{b}_{i}(t)=\beta_{ij}(t) - \beta_{ij}^{*}$ is bounded for all $i\in \mathcal{V}$ and $t\geq 0$.
    \end{lemma}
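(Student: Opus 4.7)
The plan is to prove this lemma as an essentially immediate consequence of the definitions, once one verifies that the angular separation $\beta_{ij}(t)$ is well-defined throughout time. By the setup of the problem, $\beta_{ij}(t)$ is the counterclockwise angular separation measured at the target and therefore takes values in $[0,2\pi)$ by construction, and $\beta_{ij}^{*} \in [0,2\pi)$ is a prescribed constant. So the only genuine work is to rule out degeneracies that would make $\beta_{ij}(t)$ ill-defined.

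First, I would establish well-definedness of $\beta_{ij}(t)$ for all $t \geq 0$. \textit{Lemma~\ref{lemma:d_i_is_bounded}} guarantees $d_i(t) \geq d_i^{min} > 0$ (and similarly for $d_j(t)$) under the stated initial conditions, so no agent ever coincides with the target, and the bearings from the target to each agent are well-defined. Combined with \textit{Assumption~\ref{assumption:Initial_positions}}, which places the agents almost in order and precludes inter-agent collisions evolving from the initial conditions, the counterclockwise angle from one target-to-agent bearing to the next is well-defined for every $t \geq 0$.

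Second, I would conclude with a simple range argument: because both $\beta_{ij}(t)$ and $\beta_{ij}^{*}$ lie in $[0,2\pi)$, the difference satisfies $|\tilde{b}_{i}(t)| = |\beta_{ij}(t) - \beta_{ij}^{*}| < 2\pi$ for all $i \in \mathcal{V}$ and $t \geq 0$, which is exactly the boundedness asserted in the lemma.

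I do not expect a significant obstacle; the only pitfall is to avoid conflating the true separation $\beta_{ij}(t)$ with its estimate $\hat{\beta}_{ij}(t)$. By the piecewise formula \eqref{eq:angle_estimator}, $\hat{\beta}_{ij}(t) \in [-\pi, 3\pi)$, so the \emph{estimated} error $\tilde{\beta}_{i}$ is bounded only over a larger window and the condition \eqref{eq:alpha_condition} on $\alpha$ operates at that coarser level. The present lemma instead concerns the unobservable \emph{true} error $\tilde{b}_{i}$, whose tighter bound is what will subsequently be used in the cascade and perturbation arguments.
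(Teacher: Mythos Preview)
Your argument is correct: since $\beta_{ij}(t)\in[0,2\pi)$ by definition and $\beta_{ij}^{*}\in[0,2\pi)$ is a fixed constant, the difference $\tilde b_i(t)$ is automatically confined to $(-2\pi,2\pi)$ once $\beta_{ij}(t)$ is well defined, and your appeal to \textit{Lemma~\ref{lemma:d_i_is_bounded}} and \textit{Assumption~\ref{assumption:Initial_positions}} handles well-definedness. As a proof of the lemma \emph{as stated}, this is valid and far shorter than what the paper does.

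The paper, however, proceeds very differently. It rewrites the error dynamics $\dot{\tilde{\bm b}}$ as a perturbed system $\dot{\tilde{\bm b}}=\bm q(t)+\bm g(t)$, shows the nominal part $\dot{\tilde{\bm b}}=-\tfrac{k_\omega}{d^*}\mathcal L(\mathcal G)\tilde{\bm b}$ is globally exponentially stable via the algebraic-connectivity bound of \textit{Lemma~\ref{lemma:algebraic_connectivity}}, bounds the perturbation $\bm g$, and then invokes \textit{Lemma~\ref{lemma:perturbed_sys}} to conclude uniform boundedness. Amusingly, the paper bounds $\bm g$ by using precisely the trivial range fact $\tilde b_i(t)\in(-2\pi,2\pi)$ that constitutes your whole proof; so your argument is already sitting inside theirs as an ingredient. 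What the paper's longer route buys is not a stronger boundedness conclusion but rather the intermediate machinery---the decomposition $\bm q+\bm g$ and the GES of the nominal system---which is explicitly reused in the proof of \textit{Theorem~\ref{theorem:formation}} for conditions (C1) and (C3) of the cascade lemma. If you adopt your short proof here, that machinery would have to be developed separately when you reach the theorem.
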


    \begin{proof}
        Introduce the following auxiliary variable,
        \begin{equation}
            \label{eq:angular_estimation_error} 
            \tilde{\vartheta}_{i}(t) = \beta_{ij}(t) - \hat{\beta}_{ij}(t) ,
        \end{equation}
        and the auxiliary functions,
        \begin{align}
            q_i(t) &= \medmath{\frac{k_\omega}{d^*} \left(\tilde{b}_j(t) - \tilde{b}_{i}(t)\right)} ,\label{eq:nominal_system_q_i}\\
            g_i(t) &= \medmath{\alpha k_\omega  \left(\frac{1}{\delta_j(t)+d^*} - \frac{1}{\delta_i(t) + d^*}\right) -\frac{k_\omega}{d^*} \left(\tilde{b}_{j}(t)-\tilde{b}_{i}(t)\right)} \nonumber \\
            &\quad \medmath{+ k_\omega  \left( \frac{\tilde{b}_j(t) - \tilde{\vartheta}_j(t)}{\delta_j(t)+d^*} - \frac{\tilde{b}_{i}(t)-\tilde{\vartheta}_{i}(t)}{\delta_i(t)+d^*} \right)} , \label{eq:perturbation_g_i} 
        \end{align}
        and define the vectors $\bm{q}(t)$ and $\bm{g}(t)$ as $\bm{q}(t) = [q_1(t),\dots,q_n(t)]^\top$ and $\bm{g}(t) = [g_1(t),\dots,g_n(t)]^\top$.
        
        The angular separation $\beta_{ij}(t)$ can be written in terms of $\beta_i(t)$, as follows (see also Fig.~\ref{fig:notation_conventions})
        \begin{equation}
            \label{eq:beta_ij_using_beta_i}
            \resizebox{.8\hsize}{!}{$
            \beta_{ij}(t) = \begin{cases}
                \beta_j (t) - \beta_i (t)  & \text{if } \beta_j (t) > \beta_i(t) , \\
                \beta_j (t) - \beta_i (t) + 2\pi  & \text{otherwise.}
            \end{cases}
            $}
        \end{equation}
        Noticing that $\tilde{\beta}_i(t) = \tilde{b}_{i}(t) - \tilde{\vartheta}_{i}(t)$, the time derivative of $\tilde{b}_{i}(t)$ can be expressed as follows,
        \begin{align}
            \dot{\tilde{b}}_i(t) &= \dot{\beta}_{ij}(t) - \dot{\beta}_{ij}^{*} \nonumber  \\
            \overset{\eqref{eq:beta_ij_using_beta_i}}&{=}  \dot{\beta}_j(t)- \dot{\beta}_i(t) - 0 \nonumber \\
            \overset{\eqref{eq:controller_design},\eqref{eq:delta_i_def}}&{=}  \medmath{ \frac{k_\omega\left(\alpha+\tilde{\beta}_{j}(t)\right)}{\delta_j(t)+d^*} - \frac{k_\omega \left(\alpha + \tilde{\beta}_i(t)\right)}{\delta_i(t)+d^*} } \nonumber \\
            %\overset{\eqref{eq:angular_estimation_error}}&{=} \medmath{\alpha k_\omega \left(\frac{1}{\delta_j(t)+d^*} - \frac{1}{\delta_i(t) + d^*}\right) } \nonumber \\
            %&\quad + \medmath{ k_\omega  \left( \frac{\tilde{b}_j(t) - \tilde{\vartheta}_j(t)}{\delta_j(t)+d^*} - \frac{\tilde{b}_i(t)-\tilde{\vartheta}_i(t)}{\delta_i(t)+d^*} \right) }\nonumber \\
            \overset{\eqref{eq:nominal_system_q_i},\eqref{eq:perturbation_g_i}}&{=} q_i(t) + g_i(t). \label{eq:b_tilde_i_dot}
        \end{align}
        By defining $\tilde{\bm{b}}(t) = [ \tilde{b}_1(t), \dots , \tilde{b}_n(t) ]^\top$, the error dynamics \eqref{eq:b_tilde_i_dot} can be written in a vector form as a perturbed system, 
        \begin{align}
            \Pi \; : \; \dot{\tilde{\bm{b}}}(t) &= \bm{q}(t), \label{eq:b_tilde_i_nominal} \\
            \Pi' \; : \; \dot{\tilde{\bm{b}}}(t) &= \bm{q}(t) + \bm{g}(t) .\label{eq:b_tilde_i_as_perturbed_sys} 
        \end{align}
        in which \eqref{eq:b_tilde_i_nominal} is the nominal system and \eqref{eq:b_tilde_i_as_perturbed_sys} is the perturbed system with $\bm{g}$ being the perturbation term.
        
        Now consider each condition in \textit{Lemma~\ref{lemma:perturbed_sys}}:
        % Show that the perturbation term ||g(t,x)|| <= varpi
        
        \underline{(C1) of \textit{Lemma~\ref{lemma:perturbed_sys}}:} From \textit{Lemma~\ref{lemma:d_i_is_bounded}}, we know that $d_i(t)$ is lower bounded. Noticing also that $\tilde{b}_{i}(t) \in (-2\pi,2\pi)$ and $\tilde{\vartheta}_i(t) \in (-3\pi,3\pi)$ for all $i\in\mathcal{V}$ and $t\geq 0$, we can bound the perturbation term $\bm{g}(t)$, as follows,
        \begin{equation}
            \label{eq:bounded_perturbation_varpi}
            ||\bm{g}(t)|| < \medmath{\frac{\sqrt{n} \, k_\omega \left(\alpha + 14 \pi \right)}{\min\limits_{i\in\mathcal{V}}d_{i}^{min}} } =: \varpi .
        \end{equation}

        % the origin of the nominal system is exponentially stable.
        \underline{(C2) of \textit{Lemma~\ref{lemma:perturbed_sys}}:} 
        Rewrite the nominal system \eqref{eq:b_tilde_i_nominal} in terms of $\tilde{\bm{b}}$, we obtain
        \begin{equation}
            \label{eq:B_tilde_dot}
            \dot{\tilde{\bm{b}}} = -\medmath{\frac{k_\omega}{d^*}\mathcal{L}(\mathcal{G})\tilde{\bm{b}} ,}
        \end{equation}
        where $\mathcal{L}(\mathcal{G})$ is the Laplacian matrix. Now consider the Lyapunov candidate $V(\tilde{\bm{b}}) = \frac{1}{2}\tilde{\bm{b}}^\top \tilde{\bm{b}}$ and its derivative along the trajectories of the nominal system \eqref{eq:B_tilde_dot}, 
        \begin{align*}
            \dot{V} &= \tilde{\bm{b}}^\top \dot{\tilde{\bm{b}}} 
            \overset{\eqref{eq:B_tilde_dot}}{=} - \medmath{\frac{k_\omega}{d^*}} \tilde{\bm{b}}^\top \mathcal{L}(\mathcal{G})\tilde{\bm{b}}
            \overset{\textit{Lemma~\ref{lemma:algebraic_connectivity}}}{\leq} - \medmath{\frac{k_\omega \lambda_2(\mathcal{G})}{d^*} }\tilde{\bm{b}}^\top  \tilde{\bm{b}} ,
        \end{align*}
        where $\lambda_2(\mathcal{G})$ is the algebraic connectivity. Note that $\bm{1}_n^\top \tilde{\bm{b}}=\sum \beta_{ij}(t) - \sum \beta_{ij}^* = 2\pi - 2\pi = 0$. Therefore, we conclude that the origin $\tilde{\bm{b}}=\bm{0}$ of the nominal system \eqref{eq:b_tilde_i_nominal} is globally exponentially stable (GES). 

        % There exists a V that satisfies uniform stability
        \underline{(C3) of \textit{Lemma~\ref{lemma:perturbed_sys}}:} By choosing $W_1(\tilde{\bm{b}}) = W_2(\tilde{\bm{b}}) = \frac{1}{2} \tilde{\bm{b}}^\top \tilde{\bm{b}}$ and $W_3(\tilde{\bm{b}}) = \frac{c_3}{2} \tilde{\bm{b}}^\top \tilde{\bm{b}}$ where $c_3 = \frac{k_\omega \lambda_2(G)}{d^*}$, it is easy to show that the origin of the nominal system \eqref{eq:B_tilde_dot} is GES, and therefore GAS.

        Therefore, we conclude that the solutions to the perturbed system \eqref{eq:b_tilde_i_as_perturbed_sys} are uniformly bounded.
    \end{proof}

    We now present the main results of this paper.

    \begin{theorem}
        \label{theorem:estimation_error_x_tilde_goes_to_zero}
        Under the target estimator \eqref{eq:target_estimator} and the controller \eqref{eq:controller_design}, the estimation error $\tilde{\bm{x}}_i(t)$ converges to zero exponentially fast for all $i\in\mathcal{V}$.
    \end{theorem}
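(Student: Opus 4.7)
The plan is to derive the error dynamics, reduce them to a standard persistently-excited gradient system, and invoke classical results on exponential stability of such systems.

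First I would differentiate $\tilde{\bm{x}}_i(t) = \hat{\bm{x}}_i(t) - \bm{x}$. Since the target is stationary, $\dot{\bm{x}} = \bm{0}$, and substituting \eqref{eq:target_estimator} gives $\dot{\tilde{\bm{x}}}_i = k_{est}(\bm{I}_2 - \bm{\varphi}_{iT}\bm{\varphi}_{iT}^\top)(\bm{p}_i - \hat{\bm{x}}_i)$. I would then rewrite $\bm{p}_i - \hat{\bm{x}}_i = (\bm{p}_i - \bm{x}) - \tilde{\bm{x}}_i = -d_i \bm{\varphi}_{iT} - \tilde{\bm{x}}_i$ using \eqref{eq:def_of_varphi_iT}. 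Because $(\bm{I}_2 - \bm{\varphi}_{iT}\bm{\varphi}_{iT}^\top)\bm{\varphi}_{iT} = \bm{0}$, the first summand vanishes, leaving the clean linear time-varying error dynamics
\begin{equation*}
\dot{\tilde{\bm{x}}}_i(t) = -k_{est}\bigl(\bm{I}_2 - \bm{\varphi}_{iT}(t)\bm{\varphi}_{iT}^\top(t)\bigr)\tilde{\bm{x}}_i(t) = -k_{est}\,\bar{\bm{\varphi}}_{iT}(t)\bar{\bm{\varphi}}_{iT}^\top(t)\,\tilde{\bm{x}}_i(t),
\end{equation*}
where the second equality uses the planar identity $\bm{I}_2 - \bm{\varphi}_{iT}\bm{\varphi}_{iT}^\top = \bar{\bm{\varphi}}_{iT}\bar{\bm{\varphi}}_{iT}^\top$.

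Next I would invoke the persistent excitation of $\bar{\bm{\varphi}}_{iT}(t)$ established in \textit{Lemma~\ref{lemma:varphi_is_persistent_exciting}} together with \textit{Lemma~\ref{lemma:d_i_is_bounded}} (which ensures $\bar{\bm{\varphi}}_{iT}$ is well defined and bounded). The resulting error system has the classical gradient-descent form $\dot{\tilde{\bm{x}}}_i = -k_{est}\Phi(t)\tilde{\bm{x}}_i$ with a bounded, positive semidefinite, persistently exciting regressor $\Phi(t) = \bar{\bm{\varphi}}_{iT}\bar{\bm{\varphi}}_{iT}^\top$. Standard results on LTV systems driven by p.e. signals (e.g., Anderson's theorem, or the classical argument used in the proof of \cite[Theorem~1]{deghat2010Target}) then yield uniform global exponential convergence of $\tilde{\bm{x}}_i(t)$ to the origin.

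Since the argument is identical for every $i \in \mathcal{V}$, the conclusion extends agent-wise. The main obstacle has already been handled upstream: establishing that $d_i(t)$ stays bounded away from zero (so $\bm{\varphi}_{iT}$ is well defined) and that $\bar{\bm{\varphi}}_{iT}$ is persistently exciting despite the coupling with the circumnavigation controller. Given these two facts, the remainder of the proof reduces to quoting the standard p.e. exponential stability result, and the argument is essentially one page of derivation followed by a citation.
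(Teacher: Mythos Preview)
Your proposal is correct and follows essentially the same route as the paper: the paper's proof is a one-line invocation of \textit{Lemma~\ref{lemma:varphi_is_persistent_exciting}} together with Theorem~1 of \cite{anderson1997Exponential}, and you have simply spelled out the intermediate derivation of the error dynamics that justifies that citation. Your reduction $\dot{\tilde{\bm{x}}}_i = -k_{est}\bar{\bm{\varphi}}_{iT}\bar{\bm{\varphi}}_{iT}^\top \tilde{\bm{x}}_i$ and subsequent appeal to the p.e.\ lemma and Anderson's exponential stability result are exactly what the paper has in mind.
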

    
    \begin{proof}
        \label{proof:estimation_error_x_tilde_goes_to_zero}
        The proof is an immediate consequence of \textit{Lemma~\ref{lemma:varphi_is_persistent_exciting}} and \textit{Theorem~1} of \cite{anderson1997Exponential}.
    \end{proof}
    
    \begin{theorem}
        \label{theorem:tracking_error_converges_to_zero}
        Under the target estimator \eqref{eq:target_estimator} and the controller \eqref{eq:controller_design}, the tracking error $\delta_{i}(t)$ converges to zero exponentially fast.
    \end{theorem}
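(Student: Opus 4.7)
The plan is to exploit the scalar dynamics $\dot{\delta}_i(t) = -\delta_i(t) + \rho_i(t)$ already derived in the proof of Lemma~\ref{lemma:d_i_is_bounded}, together with the exponential convergence of the localization error established in Theorem~\ref{theorem:estimation_error_x_tilde_goes_to_zero}. Viewed this way, the $\delta_i$ subsystem is a one-dimensional linear system with an exponentially decaying forcing term, so its state must inherit exponential decay. This can be stated as a cascade in the spirit of Lemma~\ref{lemma:cascaded system}, but since both subsystems are exponentially stable the cleanest route is to work directly with the closed-form solution.

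First, I would recall from the proof of Lemma~\ref{lemma:d_i_is_bounded} that the triangle inequality gives $|\rho_i(t)| \leq \|\tilde{\bm{x}}_i(t)\|$. Theorem~\ref{theorem:estimation_error_x_tilde_goes_to_zero} then supplies constants $c_i > 0$ and $\lambda > 0$ such that $\|\tilde{\bm{x}}_i(t)\| \leq c_i e^{-\lambda t}$, which transfers to $|\rho_i(t)| \leq c_i e^{-\lambda t}$ for every $i \in \mathcal{V}$. Next, I would invoke the already-displayed closed-form solution from \eqref{eq:d_i_solution}, rewritten as
\[
    \delta_i(t) = \delta_i(0)\, e^{-t} + \int_0^t e^{-(t-\tau)} \rho_i(\tau)\, d\tau,
\]
and bound the convolution by $c_i \int_0^t e^{-(t-\tau)} e^{-\lambda \tau}\, d\tau$. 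For $\lambda \neq 1$ this integral evaluates to $c_i(e^{-\lambda t} - e^{-t})/(1-\lambda)$, whose magnitude is bounded by $K_i e^{-\mu t}$ with $\mu = \min(1,\lambda)$; for the critical case $\lambda = 1$ it evaluates to $c_i t e^{-t}$, which still decays exponentially at any rate strictly less than one. In either case, $|\delta_i(t)| \leq \bigl(|\delta_i(0)| + K_i\bigr) e^{-\mu' t}$ for some $\mu' > 0$, which is exactly the claim.

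I do not anticipate a serious obstacle: the algebra is the standard convolution estimate for a stable first-order ODE driven by an exponentially vanishing input. The only mild subtlety is the borderline case $\lambda = 1$, where a linear-times-exponential term appears; this is cosmetic, since $t e^{-t}$ is dominated by $e^{-\mu' t}$ for any $0 < \mu' < 1$, and does not affect the exponential-rate conclusion. The proof therefore reduces to one line citing \eqref{eq:delta_dynamics}, \eqref{eq:d_i_solution}, Theorem~\ref{theorem:estimation_error_x_tilde_goes_to_zero}, and a convolution bound.
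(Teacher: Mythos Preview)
Your proposal is correct and follows essentially the same route as the paper's proof: both use $|\rho_i(t)| \leq \|\tilde{\bm{x}}_i(t)\|$ from \eqref{eq:rho_is_bounded}, invoke the exponential convergence of $\tilde{\bm{x}}_i$ from Theorem~\ref{theorem:estimation_error_x_tilde_goes_to_zero}, and then conclude from the first-order dynamics \eqref{eq:delta_dynamics}. The paper simply states the final step without writing out the convolution estimate, whereas you spell out the bound on $\int_0^t e^{-(t-\tau)}\rho_i(\tau)\,d\tau$ and the degenerate case $\lambda=1$; this extra detail is harmless and the arguments are otherwise identical.
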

    
    \begin{proof}
        \label{proof:tracking_error_converges_to_zero}
        From \textit{Theorem~\ref{theorem:estimation_error_x_tilde_goes_to_zero}}, we know that $\tilde{\bm{x}}_i(t)$ converges to zero exponentially fast, and because of \eqref{eq:rho_is_bounded}, the signal $\rho_i(t)$ also converges to zero exponentially fast. Then from \eqref{eq:delta_dynamics}, the tracking error $ \delta_i(t)$ converges to zero exponentially fast.
    \end{proof}

    \begin{theorem}
        \label{theorem:formation}
        Under the estimator \eqref{eq:target_estimator} and \eqref{eq:angle_estimator}, and the controller \eqref{eq:controller_design}, the angular separation $\beta_{ij}(t)$ asymptotically converges to the desired separation $\beta_{ij}^{*}$.
    \end{theorem}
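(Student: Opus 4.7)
The plan is to exploit the perturbed-system representation \eqref{eq:b_tilde_i_as_perturbed_sys} established in \textit{Lemma~\ref{lemma:tilde_beta_ij_is_bounded}} and show that its perturbation term $\bm{g}(t)$ vanishes asymptotically, so that $\tilde{\bm{b}}(t)$ inherits the asymptotic decay of the (already globally exponentially stable) nominal part.

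First I would establish that the angular-estimation error $\tilde{\vartheta}_i(t)$ defined in \eqref{eq:angular_estimation_error} tends to zero as the tracking errors $\delta_i, \delta_j$ tend to zero. The map \eqref{eq:angle_estimator} is designed so that when both agents lie on the desired circle (i.e.\ $\delta_i = \delta_j = 0$), the isosceles triangle formed by agents $i$, $j$ and the target makes $\psi_{ij}$ a specific function of $\beta_{ij}$, which substituted into \eqref{eq:angle_estimator} yields $\hat{\beta}_{ij} = \beta_{ij}$ exactly. Continuous dependence of $\tilde{\vartheta}_i$ on positions together with \textit{Theorem~\ref{theorem:tracking_error_converges_to_zero}} then gives $\tilde{\vartheta}_i(t) \to 0$.

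Next I would show $\bm{g}(t) \to 0$. In \eqref{eq:perturbation_g_i}, the first bracket $\alpha k_\omega\bigl(1/(\delta_j+d^*) - 1/(\delta_i+d^*)\bigr)$ clearly vanishes with $\delta_i, \delta_j$; regrouping the remaining brackets gives
\begin{equation*}
k_\omega \tilde{b}_j\!\left(\tfrac{1}{\delta_j+d^*} - \tfrac{1}{d^*}\right) - k_\omega \tilde{b}_i\!\left(\tfrac{1}{\delta_i+d^*} - \tfrac{1}{d^*}\right) - k_\omega\!\left(\tfrac{\tilde{\vartheta}_j}{\delta_j+d^*} - \tfrac{\tilde{\vartheta}_i}{\delta_i+d^*}\right),
\end{equation*}
each piece of which decays because $\tilde{\bm{b}}(t)$ is uniformly bounded by \textit{Lemma~\ref{lemma:tilde_beta_ij_is_bounded}} while $\delta_i, \tilde{\vartheta}_i \to 0$ by the previous step and \textit{Theorem~\ref{theorem:tracking_error_converges_to_zero}}.

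Finally I would invoke \textit{Lemma~\ref{lemma:cascaded system}} with $\bm{x}_1 = \tilde{\bm{b}}$ and $\bm{x}_2 = (\tilde{\bm{x}}, \bm{\delta})$: condition (C1) (GES of the unperturbed $\tilde{\bm{b}}$-dynamics on the invariant subspace $\{\bm{1}_n^\top\tilde{\bm{b}} = 0\}$) was already verified in the proof of \textit{Lemma~\ref{lemma:tilde_beta_ij_is_bounded}}; condition (C2) (GES of the $\bm{x}_2$-subsystem) follows from \textit{Theorem~\ref{theorem:estimation_error_x_tilde_goes_to_zero}} and \textit{Theorem~\ref{theorem:tracking_error_converges_to_zero}}; and condition (C3) (converging-input-bounded-state) is immediate from the uniform boundedness of $\tilde{\bm{b}}$ in \textit{Lemma~\ref{lemma:tilde_beta_ij_is_bounded}}. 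This delivers $\tilde{\bm{b}}(t) \to \bm{0}$, i.e., $\beta_{ij}(t) \to \beta_{ij}^*$. The main obstacle is the first step: verifying $\tilde{\vartheta}_i \to 0$ demands a careful case analysis of the piecewise definition \eqref{eq:angle_estimator}, including checking that once $\delta_i$ is sufficiently small, $\psi_{ij}$ remains within a single branch of \eqref{eq:angle_estimator} so that the correspondence $\hat{\beta}_{ij} = \beta_{ij}$ is recovered without discontinuity.
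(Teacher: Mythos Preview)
Your proposal is correct and follows essentially the same route as the paper: both cast \eqref{eq:b_tilde_i_as_perturbed_sys} together with the $\bm{\delta}$-dynamics as a cascade and verify the three hypotheses of \textit{Lemma~\ref{lemma:cascaded system}} via the isosceles-triangle observation (so that $\bm{g}|_{\bm{\delta}=\bm{0}}=\bm{0}$ and (C1) reduces to the GES nominal system from \textit{Lemma~\ref{lemma:tilde_beta_ij_is_bounded}}), \textit{Theorem~\ref{theorem:tracking_error_converges_to_zero}} for (C2), and the boundedness from \textit{Lemma~\ref{lemma:tilde_beta_ij_is_bounded}} for (C3). Your choice to enlarge the driving state to $\bm{x}_2=(\tilde{\bm{x}},\bm{\delta})$ is in fact slightly cleaner than the paper (which hides $\bm{\rho}$ inside $\bm{f}_2(\bm{\delta})$), and your explicit limit arguments for $\tilde{\vartheta}_i\to 0$ and $\bm{g}(t)\to\bm{0}$ are extra detail not strictly needed once the cascade lemma is invoked.
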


    \begin{proof}
        The error dynamics \eqref{eq:b_tilde_i_as_perturbed_sys} and \eqref{eq:delta_dynamics} can be seen as a cascaded system, as follows,
        \begin{subequations}
            \label{eq:formation_cascade}
            \begin{align}
                \sbox{\mycases}{$\displaystyle \Sigma\left\{\begin{array}{@{}c@{}}\vphantom{1\ x\geq0}\\\vphantom{0\ x<0}\end{array}\right.\kern-\nulldelimiterspace$}
              \raisebox{-.5\ht\mycases}[0pt][0pt]{\usebox{\mycases}}\Sigma_1 \; : \; \dot{\tilde{\bm{b}}}(t) &= \bm{q}(t) + \bm{g}(t) =: \bm{f}_1 (\tilde{\bm{b}}(t),\bm{\delta}(t)),\label{eq:cascade_system_1_b_tilde} \\
                 \Sigma_2 \; : \; \dot{\bm{\delta}}(t) &=  \bm{f}_2(\bm{\delta}(t)),  \label{eq:cascade_system_2_delta}
            \end{align}
        \end{subequations}
        where $\bm{f}_2(\bm{\delta}(t)):=[-\delta_1(t) + \rho_1(t),\dots,-\delta_n(t)+\rho_n(t)]^\top$.
        
        We now consider each condition in \textit{Lemma~\ref{lemma:cascaded system}}:
        
        \underline{(C1) of \textit{Lemma~\ref{lemma:cascaded system}}:} If $\bm{\delta}(\tau) =\bm{0}$ at $t=\tau$, then all agents are on the desired circle around the target which means every triangle formed by vertices $\bm{x}$, $\bm{p}_i(\tau)$, and $\bm{p}_j(\tau)$ is isosceles. Therefore, by simple trigonometry, we have $\vartheta_i(\tau) = \beta_{ij}(\tau) - \hat{\beta}_{ij}(\tau) = 0$ for all $i\in\mathcal{V}$. It then follows that
        \begin{equation}
            \label{eq:cascaded_f1(x1,0)}
            \bm{f}_1(\tilde{\bm{b}}(t),\bm{0}) = \bm{q}(t) + \bm{g}(t)|_{\bm{\delta}=\bm{0}} = \bm{q}(t) .
        \end{equation}
        From the proof of \textit{Lemma~\ref{lemma:tilde_beta_ij_is_bounded}}, we know that the origin of $\dot{\tilde{\bm{b}}}(t) = \bm{q}(t)$ is GES. Then, using \eqref{eq:cascaded_f1(x1,0)}, we conclude that the origin of $\bm{f}_1(\tilde{\bm{b}}(t),\bm{0})$ is GES, and therefore also GAS.

        \underline{(C2) of \textit{Lemma~\ref{lemma:cascaded system}}:} From \textit{Theorem~\ref{theorem:tracking_error_converges_to_zero}}, we know that the origin of the subsystem \eqref{eq:cascade_system_2_delta} is GES.

        \underline{(C3) of \textit{Lemma~\ref{lemma:cascaded system}}:} From \textit{Lemma~\ref{lemma:tilde_beta_ij_is_bounded}}, we know that the solutions of the cascaded subsystem \eqref{eq:cascade_system_1_b_tilde} exist and are uniformly bounded. 

        Thus, we conclude that the origin $(\tilde{\bm{b}}(t),\bm{\delta}(t)) = (\bm{0},\bm{0})$ of the cascaded system \eqref{eq:formation_cascade} is GAS.
    \end{proof}

\section{Simulation Results}
\label{sec:Simulation_results}
    %In this section, we present a simulated example to verify the performance of the proposed estimation and control algorithms. 

    We consider a stationary target situated at $\bm{x}(t)\equiv[0,0]^\top$, which is to be localized and circumnavigated by a group of five agents. The prescribed parameters are chosen as $d^{*}=1.2m$ and $\bm{\beta}^{*}=[\frac{5\pi}{18},\frac{\pi}{9},\frac{5\pi}{18},\frac{5\pi}{18},\frac{19\pi}{18}]^{\top}$. The control gains are selected as $k_{est}=5$, $k_c=1.5$, $k_{\omega} = 1$, and $\alpha_1 = 3.5\pi$ according to \eqref{eq:alpha_condition}. The neighbor topology is defined in accordance with \textit{Assumption~\ref{assumption:limited_sensing}}. The trajectories of five agents are depicted in Fig.~\ref{fig:simulation_1}(a). As expected from \textit{Theorem~\ref{theorem:estimation_error_x_tilde_goes_to_zero}}, the norm of the estimation error $||\tilde{\bm{x}}_i(t)||$ converges to zero exponentially fast for all $i\in\mathcal{V}$. According to \textit{Theorem~\ref{theorem:tracking_error_converges_to_zero}} and \textit{Theorem~\ref{theorem:formation}}, we expect that the agent-target distance $d_i(t)$ converges exponentially fast to $d^*$ and the separation angle $\beta_{ij}(t)$ converges to $\beta_{ij}^{*}$, which are confirmed by Fig.~\ref{fig:simulation_1}(c) and Fig.~\ref{fig:simulation_1}(d), respectively. Comparing to controllers using communication-based $\beta_{ij}(t)$ such as \cite{swartling2014Collective,boccia2017Tracking,chen2022Multicircular}, the proposed algorithm acquired the property of not relying on communication at the expense of a slightly slower convergence rate in the angular separation error $\tilde{b}_i(t)$.

    \begin{figure*}[htp]
        \centering
        \includegraphics[width=1.0\linewidth]{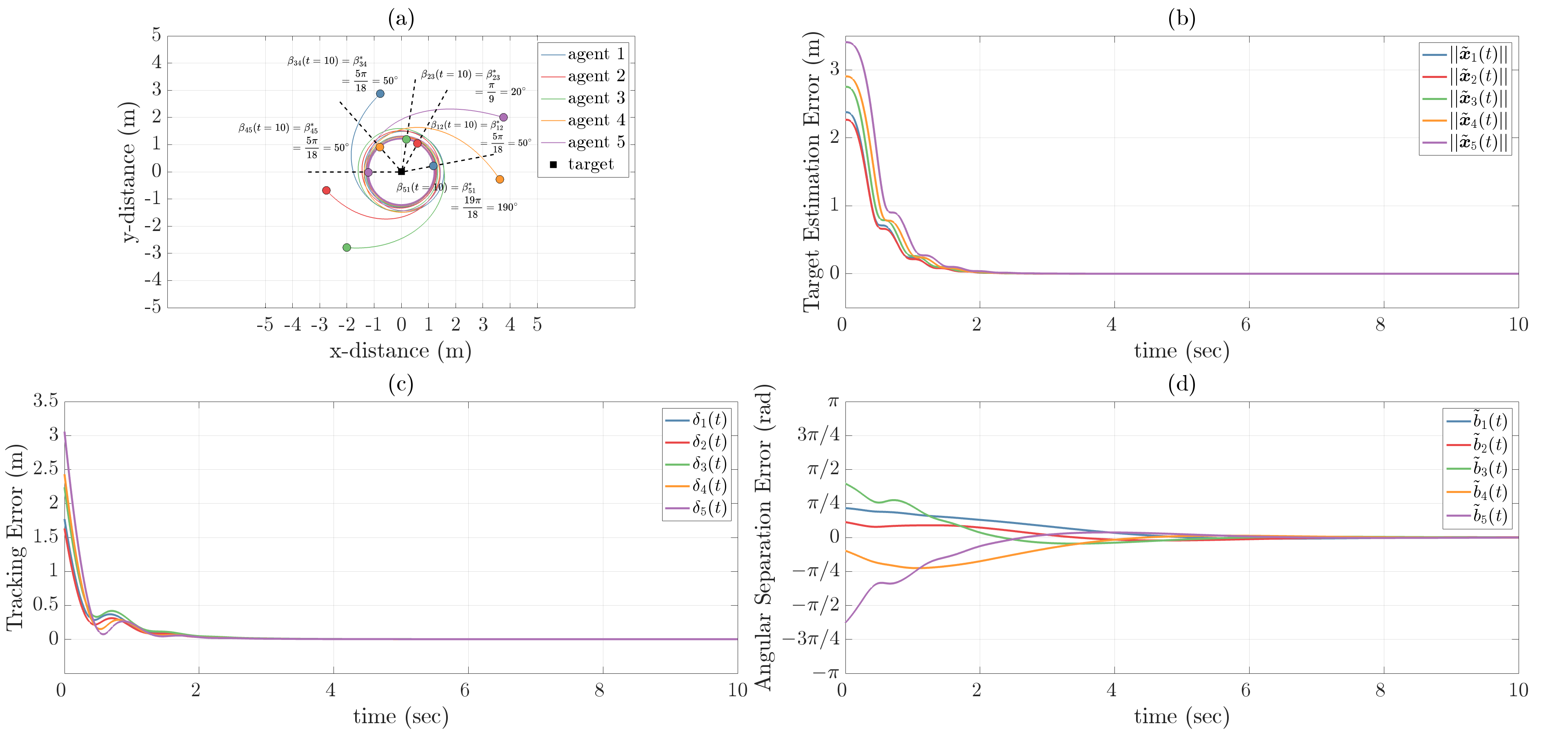}
        \caption{\label{fig:simulation_1}(a) trajectories of agents, (b) \textmd{$||\tilde{\bm{x}}_i(t)||$}, (c) $\delta_i(t)$, (d) $\tilde{b}_{i}(t)$.}
    \end{figure*}

\section{Concluding Remarks and future work}
\label{sec:Conclusion}
    In this paper, we proposed a controller for the BoTLC problem and an algorithm for angular separation. The controller does not require any communication between agents to achieve the prescribed spaced formation. The stability of the proposed algorithm has been analyzed rigorously. Simulation results put in evidence a satisfactory performance of the proposed control algorithms.
    
    Future directions of research include addressing collision avoidance among agents and considering a moving target.

%\addtolength{\textheight}{-12cm}   % This command serves to balance the column lengths
                                  % on the last page of the document manually. It shortens
                                  % the textheight of the last page by a suitable amount.
                                  % This command does not take effect until the next page
                                  % so it should come on the page before the last. Make
                                  % sure that you do not shorten the textheight too much.

\addcontentsline{toc}{chapter}{Bibliography}
\bibliographystyle{ieeetr}
\bibliography{root_v7}

\end{document}